
\documentclass[12pt,oneside,reqno,english]{amsart}
\usepackage{lmodern}
\usepackage[T1]{fontenc}
\usepackage[latin9]{inputenc}
\usepackage{geometry}
\geometry{verbose,tmargin=1.27in,bmargin=1.4in,lmargin=1in,rmargin=1in,headsep=0.8cm,footskip=0.8cm}
\synctex=-1
\usepackage{mathrsfs}
\usepackage{amsbsy}
\usepackage{amstext}
\usepackage{amsthm}
\usepackage{amssymb}
\usepackage{graphicx}
\usepackage{setspace}
\setstretch{1.2}

\makeatletter
\numberwithin{equation}{section}
\numberwithin{figure}{section}
\theoremstyle{remark}
\newtheorem*{acknowledgement*}{\protect\acknowledgementname}
\theoremstyle{plain}
\newtheorem{thm}{\protect\theoremname}[section]
\theoremstyle{remark}
\newtheorem{rem}[thm]{\protect\remarkname}
\theoremstyle{plain}
\newtheorem*{question*}{\protect\questionname}
\theoremstyle{plain}
\newtheorem{prop}[thm]{\protect\propositionname}
\theoremstyle{plain}
\newtheorem{lem}[thm]{\protect\lemmaname}

\DeclareFontFamily{OMX}{MnSymbolE}{}
\DeclareFontShape{OMX}{MnSymbolE}{m}{n}{
    <-6>  MnSymbolE5
   <6-7>  MnSymbolE6
   <7-8>  MnSymbolE7
   <8-9>  MnSymbolE8
   <9-10> MnSymbolE9
  <10-12> MnSymbolE10
  <12->   MnSymbolE12}{}
\DeclareSymbolFont{mnlargesymbols}{OMX}{MnSymbolE}{m}{n}
\SetSymbolFont{mnlargesymbols}{bold}{OMX}{MnSymbolE}{b}{n}
\DeclareMathDelimiter{\llangle}{\mathopen}{mnlargesymbols}{'164}{mnlargesymbols}{'164}
\DeclareMathDelimiter{\rrangle}{\mathclose}{mnlargesymbols}{'171}{mnlargesymbols}{'171}
\IfFileExists{lmodern.sty}{\usepackage{lmodern}}{}

\makeatother

\usepackage{babel}
\providecommand{\acknowledgementname}{Acknowledgement}
\providecommand{\lemmaname}{Lemma}
\providecommand{\propositionname}{Proposition}
\providecommand{\questionname}{Question}
\providecommand{\remarkname}{Remark}
\providecommand{\theoremname}{Theorem}

\begin{document}

\title{Analysis of Metastable Behavior via Solutions of Poisson Equations}

\author{Insuk Seo}

\address{Department of Mathematical Sciences and R.I.M., Seoul National University
\\
27-212, Gwanak-Ro 1, Gwanak-Gu, Seoul 08826, Republic of Korea.}

\email{insuk.seo@snu.ac.kr}
\begin{abstract}
We herein review the recent progress on the study of metastability
based on the analysis of solutions of Poisson equations related to
the generators of the underlying metastable dynamics. This review
paper is based on the joint work with Claudio Landim \cite{LS3} and
Fraydoun Rezakhanlou \cite{RS}.
\end{abstract}

\maketitle
\begin{acknowledgement*}
This article is written as a proceeding to the 17th International
Symposium \textit{Stochastic Analysis on Large Scale Interacting Systems}
held in the Research Institute for Mathematical Sciences (RIMS), Kyoto
University from November 5--8, 2018. The author would like to thank
the organizers for the invitation to the conference and the kind support
during the stay. The author also thanks Professor Kenkichi Tsunoda
for the invitation to Osaka University before the conference was held,
during which most of the current article was written.

The author is deeply indebted to Professor Claudio Landim, who introduced
the metastability theory to the author and shared numerous valuable
ideas, and to Professor Fraydoun Rezakhanlou who first envisioned
the approach explained herein and shared it with the author.

This work is supported by the National Research Foundation of Korea
(NRF), grant funded by the Korea government (MSIT) (No. 2018R1C1B6006896
and No. 2017R1A5A1015626).
\end{acknowledgement*}

\section{\label{sec1}Quantitative Analysis of Metastable Behavior}

Metastable behavior is a ubiquitous phenomenon exhibited by random
dynamics in a low-temperature regime. To concretely describe this
behavior, we first introduce a classic model known as the \textit{small
random perturbation of dynamical systems} (SRPDS, see \cite{FW3}
for an extensive discussion). For a smooth potential function $U:\mathbb{R}^{d}\rightarrow\mathbb{R}$
and small parameter $\epsilon>0$, we consider a stochastic differential
equation given by

\begin{equation}
d\boldsymbol{y}_{\epsilon}(t)=-\nabla U(\boldsymbol{y}_{\epsilon}(t))dt+\sqrt{2\epsilon}\,d\boldsymbol{w}_{t}\;;\;t\ge0\;,\label{e12}
\end{equation}
where $(\boldsymbol{w}_{t})_{t\ge0}$ represents the standard $d$-dimensional
Brownian motion. If $U$ has several local minima as in Figure \ref{fig1},
the stochastic process $(\boldsymbol{y}_{\epsilon}(t))_{t\ge0}$ exhibits
the metastable behavior. To describe this behavior, we consider the
zero-temperature dynamics described by the following ordinary differential
equation:
\begin{equation}
d\boldsymbol{y}(t)=-\nabla U(\boldsymbol{y}(t))dt\;\;;\;t\ge0\;.\label{e11}
\end{equation}
Then, we can observe that the local minima of the potential function
$U$ is the stable equilibria of the dynamics $(\boldsymbol{y}(t))_{t\ge0}$.
Namely, the dynamics starting at a domain of attraction of a local
minimum $\boldsymbol{m}_{1}$ of $U$ converges to $\boldsymbol{m}_{1}$
exponentially fast. We can regard \eqref{e12} as a small random perturbation
of the deterministic dynamical system \eqref{e11}, provided that
$\epsilon>0$ is small enough; hence, one can expect a similar behavior;
the random process $(\boldsymbol{y}_{\epsilon}(t))_{t\ge0}$ starting
at a neighborhood of the local minimum $\boldsymbol{m}_{1}$ of $U$
will converge to $\boldsymbol{m}_{1}$.

\begin{figure}
\includegraphics[scale=0.2]{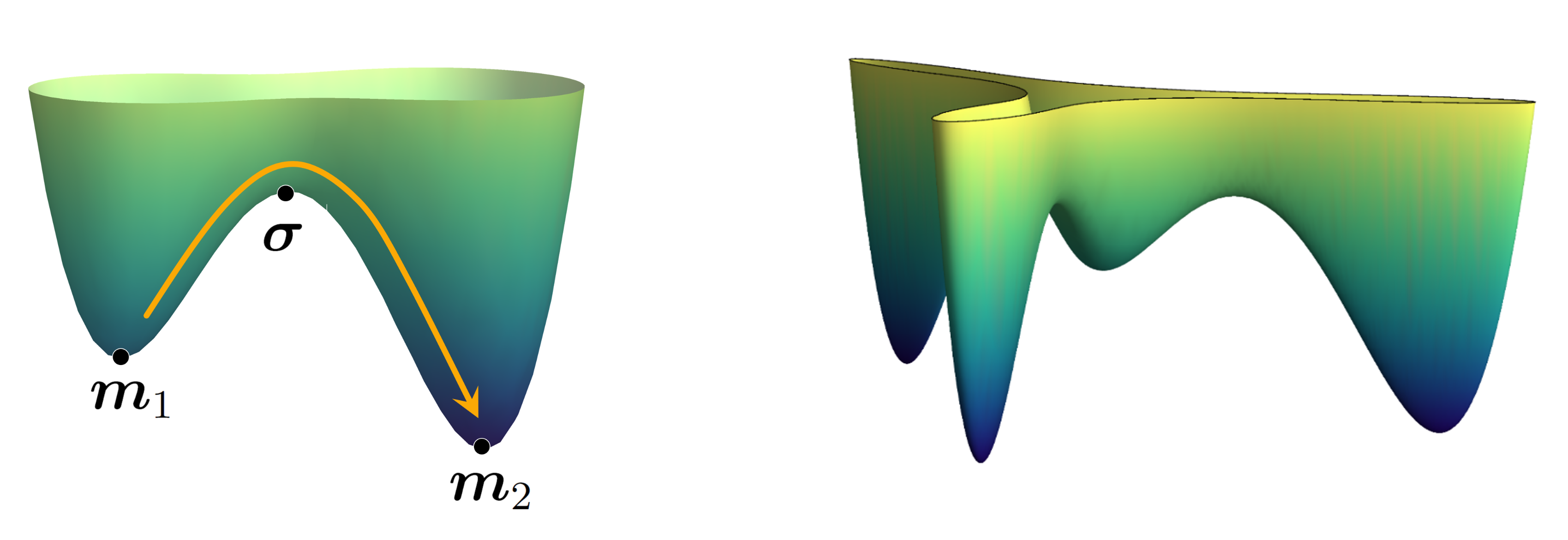}

\caption{Example of potential functions: (Left) Double-well potential, (Right)
General potential with several global minima\label{fig1}}
\end{figure}

This estimates is locally true; however, if we consider the global
picture, a crucial difference arises because of the randomness induced
by the Brownian motion: if we wait for a sufficiently long time, then
the process will move from the neighborhood of $\boldsymbol{m}_{1}$
to that of another local minimum, e.g., $\boldsymbol{m}_{2}$ (see
Figure \ref{fig1}-(Right)). We now call the neighborhoods of each
local minimum as the metastable set, and the transitions among these
sets explained above are called the metastable transition. We expect
that this metastable transition occurs repeatedly in a suitable time
scale, and is an example of metastable behavior. This type of behavior
is exhibited by numerous models, e.g., interacting particle systems
such as the zero-range processes \cite{AGL, BL3, Lan2, Seo1}, simple
inclusion processes \cite{BDG, GRV2}, and ferromagnetic systems such
as the Curie-{}-Weiss model \cite{BBI}, Ising model \cite{BM} and,
Potts model \cite{LS2, NZ}.

To explain the primary questions in this study, we only focus on the
SRPDS \eqref{e12} in this introductory section. In the SRPDS, we
can consider two cases as below, and the primary concerns are slightly
different for each case.
\begin{itemize}
\item Case 1: $U$ contains several local minima but only one global minimum
as in Figure \ref{fig1}-(Left). In this case, the process starting
from any point first stabilizes at a neighborhood of the local minima.
Subsequently, after a sufficiently long time, it performs a metastable
transition toward the neighborhood of the global minimum, and remains
therein for a longer time scale than the metastable transition time.
Therefore, a primary object to be investigated is this metastable
transition time from a local minimum to the global minimum. For instance,
its expected value and asymptotic law are the primary concern. The
robust methodology answering this question in a quantitative manner
is the \textit{potential-theoretic approach }developed by Bovier et
al. in \cite{BEGK2, BEGK1}. This methodology is explained in Section
\ref{s21}.
\item Case 2: $U$ contains multiple global minima as in Figure \ref{fig1}-(Right).
As explained previously, a process starting from a neighborhood of
a minimum will be stabilized in this set. However, after a sufficiently
long time, we observed the metastable transition, and expect that
this transition occurs repeatedly. Therefore, describing such hopping
dynamics in a rigorous manner is an important problem from the mathematical
perspective. In particular, one expects to demonstrate that a scaling
limit of this process converges to a Markov chain among the metastable
sets, in a suitable sense. In particular, if the underlying metastable
dynamics is a Markov process on a finite set, a robust methodology
for proving this scaling limit, which is called \textit{martingale
approach}, has been established by Beltran and Landim in \cite{BL1, BL2}.
We explain this approach in Section \ref{s22}.
\end{itemize}
Our new methodology introduced in Section \ref{sec3} can be regarded
as another approach to analyze Case 2, and provides concrete answers
for some questions that cannot be answered by both of the approaches
explained previously. In particular, we can conduct a rigorous analysis
explained in Case 2, when the underlying dynamics is a diffusion process
that is not a process on a finite set. We refer to \cite{LS3, RS}
for details. In this review paper, instead of focusing on this specific
model, we attempt to deliver our general idea on this new approach
by skipping the technical details. In Section \ref{sec2}, we review
the previous approaches; in Section \ref{sec3}, we explain our alternative
approach.

\section{\label{sec2}Review on Previous Approaches}

We shall consider a family of Markov processes $\{(\boldsymbol{x}_{\epsilon}(t))_{t\ge0}:\epsilon>0\}$
on $\Omega$. The sets $\mathcal{E}_{1},\,\ldots,\,\mathcal{E}_{K}$
represent the metastable set, i.e., the process $\boldsymbol{x}_{\epsilon}(t)$
starting from a point in a set $\mathcal{E}_{i}$ remains in this
set sufficiently long when $\epsilon$ is small enough. Subsequently,
after a sufficiently long time, the process exhibits a transition
to another metastable set. We write $S=\{1,\,2,\,\ldots,\,K\}$ and
let $\mu_{\epsilon}$ be the invariant measure of the process $\boldsymbol{x}_{\epsilon}(t)$.
We denote by $\mathbb{P}_{\boldsymbol{x}}^{\epsilon}$ the law of
process $\boldsymbol{x}_{\epsilon}(t)$ starting from $\boldsymbol{x}\in\Omega$,
and denote by $\mathbb{E}_{\boldsymbol{x}}^{\epsilon}$ the corresponding
expectation.
\begin{rem}
The sets $\Omega$ and $\mathcal{E}_{1},\,\ldots,\,\mathcal{E}_{K}$
may depend on $\epsilon$. Instead of the family of Markov processes
parameterized by $\epsilon$, one can consider the sequence of Markov
processes $\{(\boldsymbol{x}_{N}(t))_{t\ge0}:N\in\mathbb{N}\}$. The
approaches explained below can be applied to this sequence as well.
Instead of the $\epsilon\downarrow0$ limit, we should consider the
$N\uparrow\infty$ limit in this case.
\end{rem}

\subsection{\label{s21}Potential-theoretic approach}

We first consider the question suggested in Case 1 of Section \ref{sec1},
namely, the estimation of the metastable transition time from a metastable
set to others. To formulate this question concretely, we first introduce
some notations. For $\mathcal{A}\subset\Omega$, we denote by $\tau_{\mathcal{A}}$
the hitting time of the set $\mathcal{A}$. We write $\mathcal{E}:=\bigcup_{i\in S}\mathcal{E}_{i}$
and $\breve{\mathcal{E}}_{i}=\mathcal{E}\setminus\mathcal{E}_{i}$.
We pick a point $\boldsymbol{x}_{i}\in\mathcal{E}_{i}$ arbitrarily.
Subsequently, the primary concern is the estimation of the mean transition
time $\mathbb{E}_{\boldsymbol{x}_{i}}[\tau_{\breve{\mathcal{E}}_{i}}]$.
This quantity corresponds to the escape time from a local minimum,
and is crucially related to the mixing time and the spectral gap (see
\cite{BGK}).

It has been observed in \cite{BEGK1} that, if the process $\boldsymbol{x}_{\epsilon}(t)$
is reversible with respect to the invariant measure $\mu_{\epsilon}$,
then the mean transition time is closely related to a potential theoretic
notion known as the capacity. For disjoint subsets $\mathcal{A}$
and $\mathcal{B}$ of $\Omega$, the equilibrium potential between
$\mathcal{A}$ and $\mathcal{B}$ with respect to the process $\boldsymbol{x}_{\epsilon}(t)$
is a function $h_{\mathcal{A},\,\mathcal{B}}^{\epsilon}:\Omega\rightarrow[0,\,1]$
defined by
\[
h_{\mathcal{A},\,\mathcal{B}}^{\epsilon}(\boldsymbol{x})=\mathbb{P}_{\boldsymbol{x}}^{\epsilon}\left[\tau_{\mathcal{A}}<\tau_{\mathcal{B}}\right]\;.
\]
Subsequently, the capacity between $\mathcal{A}$ and $\mathcal{B}$
is defined by
\[
\textup{cap}_{\epsilon}(\mathcal{A},\,\mathcal{B})=\int_{\Omega}h_{\mathcal{A},\,\mathcal{B}}^{\epsilon}(-\mathcal{L}_{\epsilon}h_{\mathcal{A},\,\mathcal{B}}^{\epsilon})d\mu_{\epsilon}\;,
\]
where $\mathcal{L}_{\epsilon}$ is the generator corresponding to
the process $\boldsymbol{x}_{\epsilon}(t)$. The crucial observation
is that, under the circumstances of metastability, the following holds:
\begin{equation}
\mathbb{E}_{\boldsymbol{x}_{i}}[\tau_{\breve{\mathcal{E}}_{i}}]\simeq\frac{\mu_{\epsilon}(\mathcal{E}_{i})}{\textup{cap}_{\epsilon}(\mathcal{E}_{i},\,\breve{\mathcal{E}}_{i})}\;.\label{met}
\end{equation}
The asymptotic limit of $\mu_{\epsilon}(\mathcal{E}_{i})$ of $\epsilon\downarrow0$
is typically not difficult to obtain. The non-trivial part is to estimate
the capacity $\textup{cap}_{\epsilon}(\mathcal{E}_{i},\,\breve{\mathcal{E}}_{i})$.
If the process $\boldsymbol{x}_{\epsilon}(t)$ is reversible with
respect to $\mu_{\epsilon}$, it can be performed via the Dirichlet
and Thomson principles that provide the upper and lower bounds for
the capacity, respectively.

A successful application of this methodology is the Eyring-Kramers
formula for the SRPDS \eqref{e12}. For this model, $\mathcal{E}_{i}$
can be regarded as a small $O(1)$ neighborhood (or $O(\epsilon^{1/2+\alpha})$,
$\alpha>0$, neighborhood) of each local minimum. To deliver the primary
result in a concrete and simple form, let us temporarily assume that
$U$ is a double-well potential as in Figure \ref{fig1}-(Left): two
local minima $\boldsymbol{m}_{1}$ and $\boldsymbol{m}_{2}$ exist,
and a saddle point $\boldsymbol{\sigma}$ exists between them. The
main result of \cite{BEGK1} shows that, if the Hessians $\nabla^{2}U$
at $\boldsymbol{m}_{1},\,\boldsymbol{m}_{2},$ and $\boldsymbol{\sigma}$
are non-degenerate, and if $(\nabla^{2}U)(\boldsymbol{\sigma})$ has
a unique negative eigenvalue $-\lambda_{\boldsymbol{\sigma}}$, then
under some minor technical assumptions, the following holds:
\begin{equation}
\mathbb{E}_{\boldsymbol{m}_{1}}^{\epsilon}[\tau_{\mathcal{E}_{2}}]\,\text{\ensuremath{\simeq}}\,\frac{2\pi}{\lambda_{\sigma}}\,\sqrt{\frac{-\det(\nabla^{2}U)(\boldsymbol{\sigma})}{\det(\nabla^{2}U)(\boldsymbol{m}_{1})}}\,\exp\left\{ \frac{U(\boldsymbol{\sigma})-U(\boldsymbol{m}_{1})}{\epsilon}\right\} \;.\label{e14}
\end{equation}
It has also been verified that the normalized mean transition time
$\tau_{\mathcal{E}_{2}}/\mathbb{E}_{\boldsymbol{m}_{1}}^{\epsilon}[\tau_{\mathcal{E}_{2}}]$
converges to a mean $1$ exponential random variable as $\epsilon\downarrow0$.
We remark that, in a general $U$, a similar but slightly more complicated
expression for the quantity of the form $\mathbb{E}_{\boldsymbol{m}_{1}}^{\epsilon}[\tau_{\breve{\mathcal{E}}_{1}}]$
can be obtained similarly

This approach is extremely robust for the reversible case, as demonstrated
by numerous successful applications for a wide scope of models. Instead
of enumerating these examples, we refer to the monograph \cite{BdH}
for the comprehensive discussions of this stream of studies.

A shortcoming of this original method is that its application is limited
to the case when the process $\boldsymbol{x}_{\epsilon}(t)$ is reversible.
Recently, studies of the non-reversible case has shown significant
improvements. First, Beltran and Landim \cite{BL1, BL2} found a formula
corresponding to \eqref{met} that holds without the reversibility
assumption. This formula is fairly similar to \eqref{met}, and hence
the sharp estimation of capacity is required as well. This is another
difficulty because the classical Dirichlet and Thomson principles
hold only for the reversible case. However, recently, two variational
principles generalizing these principles to the non-reversible case
has been obtained. Gaudilli\`ere and Landim in \cite{GL} found a
generalization of the Dirichlet principle, and Slowik in \cite{Slo}
found that of the Thomson principle. It was developed for the discrete
Markov process setting, but had been generalized to the continuous
diffusion setting in \cite{LMS} as well. These principles are more
difficult to use than the case of reversible counterparts, because
they are double variational principles in complicated spaces. However,
Landim in \cite{Lan2} used this principle creatively to investigate
the metastable behavior of the non-reversible zero-range process.
It was the first study that presented quantitative results in the
study of the metastable behavior of non-reversible processes. More
recently, a manual for using this package of new machinery has been
developed in \cite{LS1} by Landim and the author of this article.
This manual has been used in \cite{LS2}, \cite{Seo1}, and \cite{LMS}
to perform the quantitative analysis on metastability.

\subsection{\label{s22}Martingale approach}

Although the potential-theoretic approach of metastability is a strong
method for analyzing the metastable random process, it cannot answer
the question suggested in Case 2 of Section \ref{sec1}. More precisely,
if there are multiple metastable sets of the same depth, we can expect
that a properly defined rescaled process should behave like a Markov
chain on these metastable sets, but the sharp asymptotics obtained
by the potential-theoretic approach cannot deduce this type of result.
In this subsection, we explain the martingale approach established
by Beltran and Landim in \cite{BL1, BL2, Lan1}. To explain this approach,
we start by introducing several notations.

All the notations defined above are maintained. Recall that there
are $K$ metastable sets $\mathcal{E}_{1},\,\ldots,\,\mathcal{E}_{K}$
for the process $\boldsymbol{x}_{\epsilon}(t)$. We define $S=\{1,\,2,\,\ldots,\,K\}$.
Now we shall define a so-called trace process of $\boldsymbol{x}_{\epsilon}(t)$
on $\mathcal{E}$. Stating heuristically, this is a process obtained
from $\boldsymbol{x}_{\epsilon}(t)$ by turning off the clock when
the process is not in $\mathcal{E}$. To define this object rigorously,
we define the following:
\[
T_{\epsilon}(t)=\int_{0}^{t}\mathbf{1}\{\boldsymbol{x}_{\epsilon}(s)\in\mathcal{E}\}\,ds\;\;;\;t\in[0,\,\infty)\;.
\]
It measures the amount of time that the process $\boldsymbol{x}_{\epsilon}(\cdot)$
has spent on the set $\mathcal{E}$ up to time $t$. Subsequently,
we define the generalized inverse of $T_{\epsilon}(\cdot)$ as
\begin{equation}
S_{\epsilon}(t)=\sup\left\{ s:T_{\epsilon}(s)\le t\right\} \;.\label{Se}
\end{equation}
Finally, the trace process $(\overline{\boldsymbol{x}}_{\epsilon}(t))_{t\ge0}$
is defined by
\[
\overline{\boldsymbol{x}}_{\epsilon}(t)=\boldsymbol{x}_{\epsilon}(S_{\epsilon}(t))\;.
\]
One can check that this process is a Markov process on $\mathcal{E}$,
with possible long jumps along the boundary $\partial\mathcal{E}=\cup_{i\in S}\partial\mathcal{E}_{i}$.
We denote by $\chi_{\mathcal{A}}:\Omega\rightarrow\{0,\,1\}$ the
indicator function on $\mathcal{A}\subset\Omega$, and define the
projection function $\Psi:\mathcal{E}\rightarrow\{1,\,2,\,\ldots,\,K\}$
as
\[
\Psi(\boldsymbol{x})=\sum_{i=1}^{K}i\,\chi_{\mathcal{E}_{i}}(\boldsymbol{x})\;.
\]
Finally, we define the projected process as $\mathbf{x}_{\epsilon}(t)=\Psi(\overline{\boldsymbol{x}}_{\epsilon}(t))$
which is a random process on $S$. With this package of notations,
our primary question can be stated as follows.
\begin{question*}
Can we prove that a scaling limit of the process $(\mathbf{x}_{\epsilon}(t))_{t\ge0}$
converges to a Markov chain $(\mathbf{x}(t))_{t\ge0}$ on $S$?
\end{question*}
To answer this question, it would be ideal if we can determine a priori
prediction of the correct time scale $\theta_{\epsilon}$ and of the
candidate for the limiting Markov chain $\mathbf{x}(t)$. We would
like to stress that these can be inferred from the results for the
mean transition time obtained by the potential-theoretic approach
explained in the previous subsection.

We denote by $\mathbb{P}_{\pi}^{\epsilon}$ the law of the Markov
process $\boldsymbol{x}_{\epsilon}(t)$ with a starting measure $\pi$
on $\Omega$, and by $\mathbf{Q}_{i}$ the law of Markov chain $\mathbf{x}(t)$
starting at $i\in\mathbf{S}$. Finally, we denote by $\mathbf{Q}_{\pi}^{\epsilon}$
the law of the rescaled projected process $\big(\mathbf{x}_{\epsilon}(\theta_{\epsilon}t)\big)_{t\ge0}$
under $\mathbb{P}_{\pi}$. The main theorem can be formulated as follows:
\begin{thm}
\label{tmain}Fix $i\in S$. For $\epsilon>0$, let $\pi_{\epsilon}$
be a probability measure on $\Omega$ concentrated on $\mathcal{E}_{i}$.
Then, $\mathbf{Q}_{\pi_{\epsilon}}^{\epsilon}$ converges to $\mathbf{Q}_{i}$
as $\epsilon$ tends to $0$.
\end{thm}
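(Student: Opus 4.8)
The plan is to follow the martingale method of Beltran and Landim: establish tightness of the family $\{\mathbf{Q}_{\pi_\epsilon}^{\epsilon}\}$ in the Skorokhod space $D([0,\infty),S)$, show that every subsequential limit solves the martingale problem associated with the generator $\mathbb{L}$ of the candidate chain $\mathbf{x}(t)$, and conclude by invoking the well-posedness of that martingale problem on the finite set $S$, together with the fact that $\pi_\epsilon$ concentrates on $\mathcal{E}_i$ so that the limit starts at $i$.

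The natural object to work with is the trace process $\overline{\boldsymbol{x}}_\epsilon(t)$, which is a Markov process on $\mathcal{E}$ and satisfies $\mathbf{x}_\epsilon=\Psi(\overline{\boldsymbol{x}}_\epsilon)$. Let $\mathcal{L}_\epsilon^{\textup{tr}}$ denote its generator. For each $F:S\to\mathbb{R}$, the process
\[
F(\mathbf{x}_\epsilon(t)) - \int_0^t \big(\mathcal{L}_\epsilon^{\textup{tr}}(F\circ\Psi)\big)(\overline{\boldsymbol{x}}_\epsilon(s))\, ds
\]
is a martingale under $\mathbb{P}_{\pi_\epsilon}^{\epsilon}$, and after the time rescaling by $\theta_\epsilon$ its drift becomes $\int_0^t \theta_\epsilon\,\big(\mathcal{L}_\epsilon^{\textup{tr}}(F\circ\Psi)\big)(\overline{\boldsymbol{x}}_\epsilon(\theta_\epsilon s))\, ds$. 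The core of the argument is a replacement estimate: when $\boldsymbol{x}\in\mathcal{E}_j$, the quantity $\theta_\epsilon\,\big(\mathcal{L}_\epsilon^{\textup{tr}}(F\circ\Psi)\big)(\boldsymbol{x})$ should, after averaging over the excursions of the process inside $\mathcal{E}_j$, be close to $(\mathbb{L}F)(j)$, where the off-diagonal entries $r(j,k)$ of $\mathbb{L}$ are the limits, as $\epsilon\downarrow0$, of $\theta_\epsilon$ times the mean jump rates from $\mathcal{E}_j$ to $\mathcal{E}_k$; the total escape rate from $\mathcal{E}_j$ is $\textup{cap}_\epsilon(\mathcal{E}_j,\breve{\mathcal{E}}_j)/\mu_\epsilon(\mathcal{E}_j)$ by \eqref{met}, and its decomposition into the individual $r(j,k)$ likewise involves capacities, so the required a priori prediction of $\theta_\epsilon$ and of $\mathbb{L}$ comes from the potential-theoretic estimates of Section \ref{s21}. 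Proving this replacement needs two inputs: (i) convergence of $\theta_\epsilon$ times the capacity-based jump rates to the finite limits $r(j,k)$; and (ii) equilibration of the trace process inside each well $\mathcal{E}_j$ on a time scale $o(\theta_\epsilon)$, so that the harmonic weighting implicit in $\mathcal{L}_\epsilon^{\textup{tr}}(F\circ\Psi)$ on $\mathcal{E}_j$ may be replaced by the average of $F\circ\Psi$ against the normalized restriction of $\mu_\epsilon$ to $\mathcal{E}_j$.

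For tightness, since $S$ is finite it suffices to rule out rapid oscillations, which I would do with an Aldous-type criterion: control the probability that the rescaled process changes metastable set within a short time after entering one, which again reduces to comparing the expected holding time in $\mathcal{E}_j$ — of order $\mu_\epsilon(\mathcal{E}_j)/\textup{cap}_\epsilon(\mathcal{E}_j,\breve{\mathcal{E}}_j)\asymp\theta_\epsilon$ by \eqref{met} — against the much shorter equilibration time from (ii). One also needs the negligibility of the time spent outside $\mathcal{E}$, namely
\[
\mathbb{E}_{\pi_\epsilon}^{\epsilon}\Big[\int_0^{\theta_\epsilon T}\mathbf{1}\{\boldsymbol{x}_\epsilon(s)\notin\mathcal{E}\}\,ds\Big]\longrightarrow 0 \quad\text{for every }T>0,
\]
which guarantees that the time change relating $\boldsymbol{x}_\epsilon$ and $\overline{\boldsymbol{x}}_\epsilon$ is asymptotically the identity, so that $\mathbf{Q}_{\pi_\epsilon}^{\epsilon}$ has the same subsequential limits whether formed from the original or the trace process. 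Granting tightness, any limit point $\mathbf{Q}^{*}$ is a law on $D([0,\infty),S)$; passing to the limit in the martingales above and using the replacement estimate to identify the drift shows that $F(\mathbf{x}(t))-\int_0^t(\mathbb{L}F)(\mathbf{x}(s))\,ds$ is a $\mathbf{Q}^{*}$-martingale for every $F:S\to\mathbb{R}$, and $\mathbf{Q}^{*}$ starts at $i$. Since $\mathbb{L}$ generates a genuine continuous-time Markov chain on the finite set $S$, this martingale problem has a unique solution, namely $\mathbf{Q}_i$; hence all subsequential limits equal $\mathbf{Q}_i$ and $\mathbf{Q}_{\pi_\epsilon}^{\epsilon}\to\mathbf{Q}_i$.

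The principal obstacle is the replacement estimate, and within it the equilibration input (ii): one must show that the process thermalizes within each metastable set well before it escapes, uniformly in the starting point inside $\mathcal{E}_j$ and while excluding anomalous excursions toward $\partial\mathcal{E}_j$. This is precisely where the fine geometry of the sets $\mathcal{E}_j$ and sharp two-sided capacity bounds enter — via the Dirichlet and Thomson principles in the reversible case, or their non-reversible generalizations due to Gaudilli\`ere and Landim and to Slowik mentioned in Section \ref{s21}. In the Beltran--Landim formalism these requirements are isolated as a small number of hypotheses (negligibility of atypical transitions from each $\mathcal{E}_j$, plus convergence of the rescaled capacities), and for any concrete model their verification is typically where most of the work lies.
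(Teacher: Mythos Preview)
Your outline is a faithful sketch of the Beltr\'an--Landim martingale approach of Section~\ref{s22}, and as such is a legitimate route to Theorem~\ref{tmain}. However, the paper's own proof follows a genuinely different path, the Poisson-equation approach of Section~\ref{sec3}. Rather than working with the trace generator $\mathcal{L}_\epsilon^{\textup{tr}}$ and establishing a capacity-based replacement estimate for $\theta_\epsilon\,\mathcal{L}_\epsilon^{\textup{tr}}(F\circ\Psi)$, the paper constructs, for each $\mathbf{f}:S\to\mathbb{R}$, a function $\phi_\epsilon$ solving
\[
\mathcal{L}_\epsilon\phi_\epsilon=\theta_\epsilon^{-1}\sum_{j\in S}\mathbf{a}_\epsilon(j)(\mathbf{L}\mathbf{f})(j)\,\chi_{\mathcal{E}_j},\qquad \phi_\epsilon\approx\mathbf{f}(j)\text{ on }\mathcal{E}_j
\]
(Proposition~\ref{poisson}). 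The Dynkin martingale $\phi_\epsilon(\widehat{\boldsymbol{x}}_\epsilon(t))-\theta_\epsilon\int_0^t(\mathcal{L}_\epsilon\phi_\epsilon)(\widehat{\boldsymbol{x}}_\epsilon(s))\,ds$ then already has drift $(\mathbf{L}\mathbf{f})\circ\Psi+o_\epsilon(1)$ on $\mathcal{E}$, so the identification of limit points requires no replacement lemma and no equilibration input~(ii). The tightness Lemma~\ref{tight2} is likewise proved directly from $\phi_\epsilon$ and a maximum principle, not from capacity asymptotics; only the negligibility of time outside $\mathcal{E}$ (your displayed estimate, Lemma~\ref{p34}) is common to both approaches.

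What each route buys: your approach reduces everything to the Beltr\'an--Landim hypotheses, which are well understood and verified for many discrete models via Dirichlet/Thomson principles. The paper's approach bypasses the trace generator and the mean-jump rate \eqref{mjr} altogether --- and this is precisely the motivation, since for diffusions the trace process lives on a disconnected set with long boundary jumps and \eqref{mjr} has no usable analogue (see the closing paragraph of Section~\ref{s22}). The price is that the model-dependent work shifts from capacity estimates to constructing and analyzing the Poisson solution $\phi_\epsilon$ (Section~\ref{sec33}), done variationally plus elliptic regularity in the reversible SRPDS case.
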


The martingale approach established by Beltran and Landim in \cite{BL1, BL2, Lan1}
provides a robust methodology to prove this theorem, especially when
the process $\boldsymbol{x}_{\epsilon}(t)$ is a Markov process on
a discrete set. This approach reduces the proof of Theorem \ref{tmain}
to the estimates of the so-called mean-jump rate between valleys.
To state this more precisely, denote by $\overline{r}_{\epsilon}:\mathcal{E}\times\mathcal{E}\rightarrow\mathbb{R}$
the jump rate of the trace process $\overline{\boldsymbol{x}}_{\epsilon}(t)$,
which is a Markov process on $\mathcal{E}$. Subsequently, the mean-jump
rate between two metastable sets $\mathcal{E}_{i}$ and $\mathcal{E}_{j}$
is defined as
\begin{equation}
\mathbf{r}_{\epsilon}(i,\,j)=\frac{1}{\mu_{\epsilon}(\mathcal{E}_{i})}\sum_{\boldsymbol{x}\in\mathcal{E}_{i},\,\boldsymbol{y}\in\mathcal{E}_{j}}\mu_{\epsilon}(\boldsymbol{x})\overline{r}_{\epsilon}(\boldsymbol{x},\,\boldsymbol{y})\;.\label{mjr}
\end{equation}
Denote by $\mathbf{r}:S\times S\rightarrow\mathbb{R}$ the jump rate
of the Markov chain $(\mathbf{x}(t))_{t\ge0}$. The main result of
Beltran and Landim can be summarized as follows: up to several technical
requirements, proving
\begin{equation}
\lim_{\epsilon\rightarrow0}\theta_{\epsilon}\mathbf{r}_{\epsilon}(i,\,j)=\mathbf{r}(i,\,j)\label{lim}
\end{equation}
is enough to demonstrate Theorem \ref{tmain}. Such an implication
has been verified by relating several martingale problems creatively.
Although the rate $\overline{r}_{\epsilon}$, i.e., the jump rate
of the trace process, is not an easy notion to manage, Beltran and
Landim observed that its weighted average $\mathbf{r}_{\epsilon}$
has a rather simple expression in terms of the potential-theoretic
notions, such as the capacity. For instance, if the process $\boldsymbol{x}_{\epsilon}(t)$
is reversible, we have
\begin{equation}
\mathbf{r}_{\epsilon}(i,\,j)=\frac{1}{2}\left[\textup{cap}_{\epsilon}(\mathcal{E}_{i},\,\breve{\mathcal{E}}_{i})+\textup{cap}_{\epsilon}(\mathcal{E}_{j},\,\breve{\mathcal{E}}_{j})-\textup{cap}_{\epsilon}(\mathcal{E}_{i}\cup\mathcal{E}_{j},\,\breve{\mathcal{E}}_{i}\cap\breve{\mathcal{E}}_{j})\right]\;.\label{rij}
\end{equation}
Thus, the analysis of metastable behavior can be performed by estimating
the capacity, similar as before. This technology has been applied
to various reversible models such as the zero-range process \cite{BL3},
the simple inclusion process \cite{BDG}, and random walks in a potential
field \cite{LMT}.

Furthermore, the result of Beltran and Landim in \cite{BL2} indicates
that the implication from \eqref{lim} to Theorem \ref{tmain} also
holds for the non-reversible case. The difficulty in the non-reversible
case is that the formula \eqref{rij} for $\mathbf{r}_{\epsilon}$
is no longer valid. Meanwhile, a rather complicated formula for $\mathbf{r}_{\epsilon}$
in terms of the so-called collapsed Markov chain has been found in
\cite{BL2} and \cite{Lan2}. Based on this, Landim in \cite{Lan2}
first established an analysis of the non-reversible metastable process
by analyzing the totally asymmetric zero-range process. Subseqeuntly,
a robust way to use this complicated formula to deduce \eqref{lim}
is established in \cite{LS1}. Recently, this has been applied to
various non-reversible models in \cite{LS1, LS2, Seo1}. We refer
to the review paper \cite{Lan4} by Landim for the comprehensive discussion
of this topic.

Before concluding this review section, we have to emphasize that the
martingale approach has not yet been successfully applied to metastable
diffusion processes such as the SRPDS \eqref{e12}. Because in this
case the trace process becomes a diffusion process on disconnected
set $\mathcal{E}$ with a long jump along $\partial\mathcal{E}$,
which is not a conventional object, the mean-jump rate such as \eqref{mjr}
is almost impossible to define. Our new methodology (explained in
the next section) can be regarded as an entirely different approach
to metastability, and can be applied to continuous models such as
the SRPDS that cannot be answered by the martingale approach.

\section{\label{sec3}Approach via Poisson Equations}

We recall all the notations from the previous section. In this section,
we explain a new approach developed in \cite{LS3, RS}. This approach
provides an alternative method to prove Theorem \ref{tmain} by analyzing
a Poisson equation, instead of estimating the potential-theoretic
notions such as capacity.

We shall assume that we have predictions of correct time scale $\theta_{\epsilon}$
as well as the limiting Markov chain $\mathbf{x}(t)$, as mentioned
before. In addition, we assume that
\begin{equation}
\mu_{\epsilon}(\mathcal{E}_{i})=(1+o_{\epsilon}(1))\,\nu(i)\;\;;\;\text{for all }i\in\{1,\,\dots,\,K\}\;,\label{inv1}
\end{equation}
where $\nu$ is a measure on $S=\{1,\,\cdots,\,K\}$ satisfying
\begin{equation}
\sum_{i=1}^{K}\nu(i)=1\;,\label{inv2}
\end{equation}
Therefore, using \eqref{inv1} and \eqref{inv2}, we can verify that
$\nu$ is the invariant measure of to the Markov chain $\mathbf{x}(t)$,
provided that Theorem \ref{tmain} is correct.

Denote by $\mathcal{L}_{\epsilon}$ and $\mathbf{L}$ the generators
corresponding to the Markov processes $\boldsymbol{x}_{\epsilon}(t)$
and $\mathbf{x}(t)$, respectively. Define $\mathbf{a}_{\epsilon}=(\mathbf{a}_{\epsilon}(i):i\in S)\in\mathbb{R}^{S}$
by
\[
\mathbf{a}_{\epsilon}(i)=\frac{\nu(i)}{\mu_{\epsilon}(\mathcal{E}_{i})}\;\;;\;i\in S\;.
\]
By \eqref{inv1}, we have
\begin{equation}
\mathbf{a}_{\epsilon}(i)=1+o_{\epsilon}(1)\text{ for all }i\in S\;.\label{aei}
\end{equation}
The following proposition is the primary step in our new approach.
\begin{prop}
\label{poisson}For all $\mathbf{f}:S\rightarrow\mathbb{R}$, there
exists a bounded function $\phi_{\epsilon}=\phi_{\epsilon}^{\mathbf{f}}:\Omega\rightarrow\mathbb{R}$
satisfying the following conditions simultaneously:
\begin{enumerate}
\item The function $\phi_{\epsilon}$ satisfies the following equation:
\begin{equation}
\mathcal{\mathscr{\mathcal{L}}_{\epsilon}}\phi_{\epsilon}=\theta_{\epsilon}^{-1}\sum_{i\in S}\mathbf{a}_{\epsilon}(i)\,(\mathbf{L}\mathbf{f})(i)\,\chi_{\mathcal{E}_{i}}\;.\label{poi1}
\end{equation}
\item For all $i\in S$, the following holds:
\begin{equation}
\lim_{\epsilon\rightarrow0}\sup_{x\in\mathcal{E}_{i}}\left|\phi_{\epsilon}(x)-\mathbf{f}(i)\right|=0\;.\label{poi2}
\end{equation}
\end{enumerate}
\end{prop}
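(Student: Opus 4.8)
The plan is to construct $\phi_\epsilon$ as a suitable modification of a solution to a Poisson equation for the generator $\mathcal{L}_\epsilon$, guided by the equilibrium potentials $h^\epsilon_{\mathcal{E}_i,\breve{\mathcal{E}}_i}$ already introduced in Section~\ref{s21}. First I would recall that for a metastable family the equilibrium potentials $h^\epsilon_i := h^\epsilon_{\mathcal{E}_i,\breve{\mathcal{E}}_i}$ are, up to $o_\epsilon(1)$, equal to $1$ on $\mathcal{E}_i$ and $0$ on $\mathcal{E}_j$ for $j\neq i$, and they form (asymptotically) a partition of unity: $\sum_{i\in S} h^\epsilon_i = 1$. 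A natural first guess is the ansatz $\phi_\epsilon = \sum_{i\in S}\mathbf{f}(i)\,h^\epsilon_i$, which automatically satisfies condition~(2), namely \eqref{poi2}, since on $\mathcal{E}_j$ it equals $\mathbf{f}(j) + o_\epsilon(1)$. Applying $\mathcal{L}_\epsilon$ to this ansatz produces $\mathcal{L}_\epsilon\phi_\epsilon = \sum_i \mathbf{f}(i)\,\mathcal{L}_\epsilon h^\epsilon_i$, and since each $h^\epsilon_i$ is harmonic off $\mathcal{E}_i\cup\breve{\mathcal{E}}_i = \mathcal{E}$, the right-hand side is supported on $\mathcal{E}$. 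The issue is that its value there is governed by the fine structure of the capacities and does not exactly match the prescribed target $\theta_\epsilon^{-1}\sum_i \mathbf{a}_\epsilon(i)(\mathbf{L}\mathbf{f})(i)\chi_{\mathcal{E}_i}$.

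To fix this mismatch I would instead solve the Poisson equation directly: let $g_\epsilon := \theta_\epsilon^{-1}\sum_{i\in S}\mathbf{a}_\epsilon(i)(\mathbf{L}\mathbf{f})(i)\chi_{\mathcal{E}_i}$ be the desired right-hand side, and note that $\int_\Omega g_\epsilon\,d\mu_\epsilon = \theta_\epsilon^{-1}\sum_i \mathbf{a}_\epsilon(i)(\mathbf{L}\mathbf{f})(i)\mu_\epsilon(\mathcal{E}_i) = \theta_\epsilon^{-1}\sum_i \nu(i)(\mathbf{L}\mathbf{f})(i) = 0$, where the last equality is exactly the statement that $\nu$ is invariant for $\mathbf{L}$, which holds by \eqref{inv1}--\eqref{inv2} together with Theorem~\ref{tmain}. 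Because $g_\epsilon$ has zero $\mu_\epsilon$-average, the Poisson equation $\mathcal{L}_\epsilon\phi_\epsilon = g_\epsilon$ admits a solution, unique up to an additive constant; in probabilistic terms one may write $\phi_\epsilon(x) = -\int_0^\infty \mathbb{E}^\epsilon_x[g_\epsilon(\boldsymbol{x}_\epsilon(t))]\,dt$ plus a normalizing constant, which makes sense once one knows enough ergodicity of $\boldsymbol{x}_\epsilon$. This settles condition~(1), \eqref{poi1}, by construction; boundedness of $\phi_\epsilon$ follows from the boundedness of $g_\epsilon$ and a uniform control on the relevant resolvent.

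The real work is then condition~(2), \eqref{poi2}: I must show that the solution constructed above is, asymptotically, constant $=\mathbf{f}(i)$ on each metastable set $\mathcal{E}_i$. The strategy is to compare $\phi_\epsilon$ with the partition-of-unity ansatz $\psi_\epsilon := \sum_i\mathbf{f}(i)h^\epsilon_i$. The difference $w_\epsilon := \phi_\epsilon - \psi_\epsilon$ solves $\mathcal{L}_\epsilon w_\epsilon = g_\epsilon - \mathcal{L}_\epsilon\psi_\epsilon$, whose right-hand side is supported in $\mathcal{E}$. By the maximum principle (or the probabilistic representation in terms of hitting $\mathcal{E}$), the oscillation of $w_\epsilon$ over each $\mathcal{E}_i$, and indeed its size, is controlled by the discrepancy between the prescribed target jump rates $\theta_\epsilon^{-1}\mathbf{a}_\epsilon(i)(\mathbf{L}\mathbf{f})(i)$ and the "actual" ones built into $\mathcal{L}_\epsilon\psi_\epsilon$, integrated against the equilibrium measure on $\mathcal{E}$. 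This discrepancy is precisely what the hypothesis \eqref{lim} (the convergence $\theta_\epsilon\mathbf{r}_\epsilon(i,j)\to\mathbf{r}(i,j)$), together with \eqref{aei}, forces to be $o_\epsilon(1)$ — this is the content linking the Poisson-equation approach to the classical mean-jump-rate picture. Concretely, one writes $\mathcal{L}_\epsilon\psi_\epsilon$ restricted to $\mathcal{E}_i$, integrates against $\mu_\epsilon$, and recognizes the result as (a combination of) capacities, hence as $\mu_\epsilon(\mathcal{E}_i)$ times a linear expression in the $\mathbf{r}_\epsilon(i,j)$'s; matching this against $g_\epsilon$ is exactly \eqref{lim}. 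I expect the main obstacle to be the uniform control of $w_\epsilon$ inside each $\mathcal{E}_i$ — not merely its integral against $\mu_\epsilon$, but its pointwise supremum — which requires that the process started anywhere in $\mathcal{E}_i$ equilibrates within $\mathcal{E}_i$ on a time scale much shorter than $\theta_\epsilon$ before it escapes; this is where the precise definition of the metastable sets $\mathcal{E}_i$ and a local mixing/regularity estimate (e.g. a Harnack-type inequality for $\mathcal{L}_\epsilon$-harmonic functions on $\mathcal{E}_i$, or an a priori bound on $\mathbb{E}^\epsilon_x[\tau_{\breve{\mathcal{E}}_i}]$ uniform in $x\in\mathcal{E}_i$) must be invoked. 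The remaining steps — verifying $\int g_\epsilon\,d\mu_\epsilon = 0$, solvability, boundedness — are routine once this equilibration input is in place.
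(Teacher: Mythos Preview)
Your proposal has a structural problem and also diverges substantially from the paper's route, so let me address both.

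\textbf{The gap.} You justify the solvability of the Poisson equation by computing $\int g_\epsilon\,d\mu_\epsilon=0$ and, for this, you invoke that $\nu$ is invariant for $\mathbf{L}$ ``by \eqref{inv1}--\eqref{inv2} together with Theorem~\ref{tmain}.'' But Theorem~\ref{tmain} is exactly what Proposition~\ref{poisson} is being used to prove, so this is circular; you must take the invariance of $\nu$ as part of the a~priori prediction of the limiting chain, not deduce it. More seriously, your verification of condition~(2) rests on the hypothesis \eqref{lim}, the convergence $\theta_\epsilon\mathbf{r}_\epsilon(i,j)\to\mathbf{r}(i,j)$. That estimate is the crux of the \emph{martingale} approach of Section~\ref{s22}, and the entire point of the Poisson-equation method in Section~\ref{sec3} is to bypass it: for diffusions such as \eqref{e12} the trace process has boundary jumps and the mean-jump rate \eqref{mjr} is not even well defined, which is precisely why the paper introduces this alternative machinery. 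Importing \eqref{lim} as an input defeats the purpose. There is also a technical mismatch in your comparison: the ansatz $\psi_\epsilon=\sum_i\mathbf{f}(i)h^\epsilon_i$ is constant on each $\mathcal{E}_j$, so in the diffusion setting $\mathcal{L}_\epsilon\psi_\epsilon$ lives on $\partial\mathcal{E}$ (a surface measure), not on $\mathcal{E}$, and cannot be compared pointwise with the volumetric right-hand side $g_\epsilon$.

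\textbf{How the paper proceeds instead.} The paper treats Proposition~\ref{poisson} as the sole model-dependent step and sketches the argument (Section~\ref{sec33}) for the reversible SRPDS. There one obtains a solution $\psi_\epsilon$ of \eqref{poi1} as the minimizer of the functional $\mathcal{I}_\epsilon(\phi)=\tfrac12\theta_\epsilon\mathcal{D}_\epsilon(\phi)+\sum_i\mathbf{a}_\epsilon(i)(\mathbf{L}\mathbf{f})(i)\int_{\mathcal{E}_i}\phi\,d\mu_\epsilon$, which immediately gives the Dirichlet bound $\theta_\epsilon\mathcal{D}_\epsilon(\psi_\epsilon)=O(1)$. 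Poincar\'e's inequality on each $\mathcal{E}_i$ then forces $\psi_\epsilon$ to be $L^2$-close to a constant $\mathbf{q}(i)$ there, and interior elliptic estimates \`a la \cite{GT} upgrade this to the required $L^\infty$ statement. The identification $\mathbf{q}(i)+c_\epsilon=\mathbf{f}(i)$ is then carried out by coupling $\psi_\epsilon$ with an explicit test function, not by appealing to \eqref{lim}. So the paper's argument replaces your ``compare with equilibrium potentials and invoke mean-jump-rate asymptotics'' by ``variational construction, energy bound, Poincar\'e, elliptic bootstrap, then identify the constants.'' Your outline would be closer in spirit if you dropped the reliance on \eqref{lim} and instead explained how to pin down the constants $\mathbf{q}(i)$ directly.
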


We observed that finding a test function $\phi_{\epsilon}^{\mathbf{f}}$
explained in the last proposition implies Theorem \ref{tmain}, up
to several minor technical issues. Hence, in the remaining part of
the current section, we explain the model-independent proof of Theorem
\ref{tmain} assuming Proposition \ref{poisson}. The proof of Proposition
\ref{poisson} it the only model-dependent part. We explain a general
idea for this part in Section \ref{sec33}.

In general, two ingredients are required to complete the proof of
the limit theorem such as Theorem \ref{tmain}: the tightness of family
$(\mathbf{Q}_{\pi_{\epsilon}}^{\epsilon})_{\epsilon>0}$, and the
identification of limit points of this family. These two crucial ingredients
are proven in Sections \ref{sec22} and \ref{sec23}, respectively.

\subsection{\label{sec22}Tightness}

For convenience, we fix $i\in S$ and the sequence of the family of
probability measures $(\pi_{\epsilon})_{\epsilon>0}$ concentrated
on $\mathcal{E}_{i}$ throughout this subsection. The tightness result
required in our context can be stated as the following proposition.
\begin{prop}
\label{tight}The family $\{\mathbf{Q}_{\pi_{\epsilon}}^{\epsilon}:\epsilon\in(0,\,1]\}$
is tight on $D([0,\,\infty),\,S)$. Furthermore, every limit points
$\mathbf{Q}^{*}$ of this family, as $\epsilon$ tends to $0$, satisfy
\[
\mathbf{Q}^{*}(\mathbf{x}(0)=i)=1\text{\;\;and\;\;}\mathbf{Q}^{*}(\mathbf{x}(t)\neq\mathbf{x}(t-))=0\;\;\text{for all }t>0\;.
\]
\end{prop}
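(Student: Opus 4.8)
The plan is to establish tightness in the Skorokhod space $D([0,\infty),S)$ via the Aldous criterion, and then to read off the two displayed properties of limit points from the exponential-stability of the metastable sets and the fact that the projected process lives on the finite set $S$. Since $S$ is finite, tightness of the one-dimensional marginals is automatic; the only substantive requirement is control of oscillations over small time intervals. For a stopping time $\sigma$ (with respect to the filtration of $\mathbf{x}_\epsilon(\theta_\epsilon\cdot)$) bounded by a fixed $T$, and for small $\delta>0$, I would bound
\[
\mathbf{Q}_{\pi_\epsilon}^{\epsilon}\big(\,|\mathbf{x}_\epsilon(\theta_\epsilon(\sigma+\delta))-\mathbf{x}_\epsilon(\theta_\epsilon\sigma)|>0\,\big)\;\le\;\sup_{j\in S}\;\sup_{\boldsymbol{y}\in\mathcal{E}_j}\mathbb{P}_{\boldsymbol{y}}^{\epsilon}\big(\,\overline{\boldsymbol{x}}_\epsilon \text{ leaves } \mathcal{E}_j \text{ before trace-time } \theta_\epsilon\delta\,\big),
\]
using the strong Markov property together with the fact that $\mathbf{x}_\epsilon(\theta_\epsilon\sigma)$ belongs to some $\mathcal{E}_j$ by construction of the trace process. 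The right-hand side is exactly the probability that the trace process, started inside a metastable valley, performs a valley-to-valley transition within time of order $\theta_\epsilon\delta$; since $\theta_\epsilon$ is the correct metastable time scale, this probability is at most $C\delta+o_\epsilon(1)$, which can be made small by first choosing $\delta$ small and then letting $\epsilon\downarrow 0$. This is the Aldous condition, and it yields tightness.

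For the identification of limit points, the claim $\mathbf{Q}^{*}(\mathbf{x}(0)=i)=1$ is immediate: each $\pi_\epsilon$ is concentrated on $\mathcal{E}_i$, so $\mathbf{x}_\epsilon(0)=\Psi(\overline{\boldsymbol{x}}_\epsilon(0))=i$ almost surely under $\mathbf{Q}_{\pi_\epsilon}^{\epsilon}$, and this passes to any weak limit because $\{\mathbf{x}(0)=i\}$ is both open and closed in $D([0,\infty),S)$. For the second property — absence of fixed discontinuities — I would argue that for each fixed $t>0$ the estimate above, applied with $\sigma\equiv t$ and the time interval $[t-\delta,t+\delta]$, shows
\[
\limsup_{\epsilon\to0}\mathbf{Q}_{\pi_\epsilon}^{\epsilon}\big(\mathbf{x}_\epsilon(\theta_\epsilon s)\neq \mathbf{x}_\epsilon(\theta_\epsilon t)\text{ for some }s\in[t-\delta,t+\delta]\big)\le C\delta,
\]
and since the event $\{\mathbf{x}(t)\neq\mathbf{x}(t-)\}$ is contained, up to sets of $\mathbf{Q}^*$-measure zero coming from the continuity points issue, in the corresponding oscillation event for the limit, letting $\delta\downarrow 0$ gives $\mathbf{Q}^{*}(\mathbf{x}(t)\neq\mathbf{x}(t-))=0$. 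One technical point to handle with care here is that the Skorokhod topology only controls evaluation at continuity points of the limit path, so I would phrase the argument in terms of $\mathbf{Q}^*(\mathbf{x}(t)-\mathbf{x}(t-)\neq 0)$ by integrating over a small window and using the portmanteau theorem, rather than pointwise evaluation.

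The main obstacle is the quantitative transition estimate underlying the Aldous bound: proving that, uniformly over starting points in a metastable valley, the trace process does not exit the valley within trace-time $o(\theta_\epsilon)$ except with probability $O(\delta)+o_\epsilon(1)$. Heuristically this is precisely the content of the metastability assumption (valleys are stable on time scale $\theta_\epsilon$ and transitions are rare below it), and it can be extracted from the a priori mean-transition-time bounds — of the type in \eqref{met} — by a Markov inequality argument, together with a uniformity statement over the valley that typically follows from the fact that the process equilibrates inside $\mathcal{E}_j$ on a time scale much shorter than $\theta_\epsilon$. Making the "uniformly over $\boldsymbol{y}\in\mathcal{E}_j$" part rigorous is where model-specific input (or the general hypotheses already assumed in this section) enters; everything else is soft Skorokhod-space bookkeeping.
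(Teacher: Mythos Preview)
Your overall architecture matches the paper's: tightness via Aldous' criterion, the strong Markov property to restart inside some $\mathcal{E}_j$, and a short-time transition estimate to close. However, there are two concrete gaps.

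First, you omit one of the two ingredients the paper uses. The projected process $\mathbf{x}_\epsilon(\cdot)=\Psi(\boldsymbol{x}_\epsilon(S_\epsilon(\cdot)))$ is built through the random time change $S_\epsilon$, so after applying the strong Markov property at $\sigma$ you must control how much \emph{real} time elapses during the trace-time window $[\sigma,\sigma+\delta]$. In the paper this is Lemma~\ref{p34}: the excursion time $\Delta(\theta_\epsilon t)$ spent in $\Omega\setminus\mathcal{E}$ is $o(\theta_\epsilon)$, which lets one restrict to the event $\{S_\epsilon(\tau+a)-S_\epsilon(\tau)\le 2a_0\}$ and only then invoke the short-transition bound (Lemma~\ref{tight2}) for the \emph{original} process. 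Your inequality jumps directly to ``the trace process leaves $\mathcal{E}_j$ before trace-time $\theta_\epsilon\delta$'', but since the trace hitting time of $\breve{\mathcal{E}}_j$ is \emph{smaller} than the real one, Lemma~\ref{tight2} alone does not bound that event; you need the $\Delta$-estimate to bridge trace-time and real-time.

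Second, your proposed source for the short-time bound is off. A Markov-inequality argument from $\mathbb{E}[\tau_{\breve{\mathcal{E}}_i}]\asymp\theta_\epsilon$ controls the \emph{upper} tail $\{\tau\ge K\theta_\epsilon\}$, not the \emph{lower} tail $\{\tau\le a\theta_\epsilon\}$ that Aldous requires. In the paper, Lemma~\ref{tight2} is not obtained from \eqref{met} at all but is derived from Proposition~\ref{poisson}: one applies the test function $\phi_\epsilon^{\mathbf f}$ (with $\mathbf f=\chi_{\{i\}^c}$) to the stopped process and combines \eqref{poi1}--\eqref{poi2} with a maximum-principle bound to get $\sup_{\boldsymbol{x}\in\mathcal{E}_i}\mathbb{P}_{\boldsymbol{x}}^\epsilon[\tau_{\breve{\mathcal{E}}_i}\le a\theta_\epsilon]\le Ca+o_\epsilon(1)$. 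Your treatment of the initial condition and of fixed-time discontinuities is fine and coincides with the paper's.
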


It is shown in \cite[Sections 7, 8]{LS3} that the proof of this tightness
result is based entirely on the two estimates stated in Lemmas \ref{tight2} and \ref{p34}.
The former verifies that a transition from a metastable set to another
one cannot occur in a scale shorter than $\theta_{\epsilon}$, while
the latter demonstrates that in the course of the metastable transition,
the process does not spend much time outside the metastable sets,
namely $\Omega\setminus\mathcal{E}$. We start by proving the former
lemma, whose proof depends solely on Proposition \ref{poisson}. We
simply write $\mathbb{P}_{\boldsymbol{x}}^{\epsilon}:=\mathbb{P}_{\delta_{\boldsymbol{x}}}^{\epsilon}$,
where $\delta_{\boldsymbol{x}}$ is a Dirac delta measure at $\boldsymbol{x}\in\Omega$.
\begin{lem}
\label{tight2}It holds that
\begin{equation}
\lim_{a\rightarrow0}\limsup_{\epsilon\rightarrow0}\sup_{\boldsymbol{x}\in\mathcal{E}_{i}}\mathbb{P}_{\boldsymbol{x}}^{\epsilon}\left[\tau_{\breve{\mathcal{E}}_{i}}\le a\theta_{\epsilon}\right]=0\;.\label{ep471}
\end{equation}
\end{lem}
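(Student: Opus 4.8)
The plan is to reduce everything to a single Dynkin-martingale computation fed by Proposition~\ref{poisson}; the model-dependent content is entirely absorbed into that proposition, so what remains is soft.

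First I would specialise Proposition~\ref{poisson} to the test function $\mathbf{f}$ given by $\mathbf{f}(i)=1$ and $\mathbf{f}(j)=0$ for $j\neq i$ (we may of course assume $K\geq 2$, as otherwise $\breve{\mathcal{E}}_i=\emptyset$ and there is nothing to prove), and denote by $\phi_\epsilon=\phi_\epsilon^{\mathbf{f}}$ the resulting bounded function. Set $\delta_\epsilon:=\max_{k\in S}\sup_{\boldsymbol{x}\in\mathcal{E}_k}|\phi_\epsilon(\boldsymbol{x})-\mathbf{f}(k)|$, which tends to $0$ by \eqref{poi2}. Since the indicators $\chi_{\mathcal{E}_k}$ in \eqref{poi1} have pairwise disjoint supports and $|\mathbf{a}_\epsilon(k)|\leq 2$ for $\epsilon$ small by \eqref{aei}, the right-hand side of \eqref{poi1} is bounded in absolute value by $c\,\theta_\epsilon^{-1}$, where $c:=2\max_{k\in S}|(\mathbf{L}\mathbf{f})(k)|$ is a finite constant depending only on $i$; moreover it vanishes on $\Omega\setminus\mathcal{E}$. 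The one preliminary observation I then need is a global bound on $\phi_\epsilon$: since $\phi_\epsilon$ is bounded and $\mathcal{L}_\epsilon$-harmonic on $\Omega\setminus\mathcal{E}$, optional stopping gives $\phi_\epsilon(\boldsymbol{x})=\mathbb{E}_{\boldsymbol{x}}^\epsilon[\phi_\epsilon(\boldsymbol{x}_\epsilon(\tau_{\mathcal{E}}))]$ for $\boldsymbol{x}\notin\mathcal{E}$, and since $\phi_\epsilon$ lies in $[-\delta_\epsilon,\,1+\delta_\epsilon]$ on $\mathcal{E}$ this forces $-\delta_\epsilon\leq\phi_\epsilon\leq 1+\delta_\epsilon$ throughout $\Omega$.

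Next I would fix $\boldsymbol{x}\in\mathcal{E}_i$ and $a\in(0,1]$, put $\sigma:=\tau_{\breve{\mathcal{E}}_i}\wedge(a\theta_\epsilon)$ (a bounded stopping time), and apply Dynkin's formula:
\[
\mathbb{E}_{\boldsymbol{x}}^\epsilon[\phi_\epsilon(\boldsymbol{x}_\epsilon(\sigma))]=\phi_\epsilon(\boldsymbol{x})+\mathbb{E}_{\boldsymbol{x}}^\epsilon\Big[\int_0^\sigma(\mathcal{L}_\epsilon\phi_\epsilon)(\boldsymbol{x}_\epsilon(s))\,ds\Big].
\]
The integral term has absolute value at most $c\,\theta_\epsilon^{-1}\,\mathbb{E}_{\boldsymbol{x}}^\epsilon[\sigma]\leq c\,\theta_\epsilon^{-1}(a\theta_\epsilon)=ca$, and $\phi_\epsilon(\boldsymbol{x})\geq 1-\delta_\epsilon$, so the left-hand side is $\geq 1-\delta_\epsilon-ca$. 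On the other hand, writing $A:=\{\tau_{\breve{\mathcal{E}}_i}\leq a\theta_\epsilon\}$: on $A$ we have $\boldsymbol{x}_\epsilon(\sigma)\in\breve{\mathcal{E}}_i=\bigcup_{k\neq i}\mathcal{E}_k$, where $\phi_\epsilon\leq\delta_\epsilon$, while on $A^c$ the global bound gives $\phi_\epsilon(\boldsymbol{x}_\epsilon(\sigma))\leq 1+\delta_\epsilon$; hence
\[
\mathbb{E}_{\boldsymbol{x}}^\epsilon[\phi_\epsilon(\boldsymbol{x}_\epsilon(\sigma))]\leq\delta_\epsilon\,\mathbb{P}_{\boldsymbol{x}}^\epsilon[A]+(1+\delta_\epsilon)(1-\mathbb{P}_{\boldsymbol{x}}^\epsilon[A])=1+\delta_\epsilon-\mathbb{P}_{\boldsymbol{x}}^\epsilon[A].
\]
Comparing the two estimates yields $\mathbb{P}_{\boldsymbol{x}}^\epsilon[\tau_{\breve{\mathcal{E}}_i}\leq a\theta_\epsilon]\leq 2\delta_\epsilon+ca$ uniformly in $\boldsymbol{x}\in\mathcal{E}_i$. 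Taking $\sup_{\boldsymbol{x}\in\mathcal{E}_i}$, then $\limsup_{\epsilon\to 0}$ (so the $\delta_\epsilon$ drops out, since $c$ depends on neither $\epsilon$ nor $a$), and finally $\lim_{a\to 0}$ gives \eqref{ep471}.

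The bulk of the work is of course Proposition~\ref{poisson} itself; granting it, the only delicate point above is the global sup-bound $\phi_\epsilon\leq 1+o_\epsilon(1)$, invoked on the event $A^c$. I would derive it from harmonicity of $\phi_\epsilon$ off $\mathcal{E}$ together with a.s.\ finiteness of $\tau_{\mathcal{E}}$ (recurrence of the metastable dynamics to its wells) and mild boundary regularity of the sets $\mathcal{E}_k$ — this is precisely where the ``minor technical assumptions'' on the underlying model enter. In the intended application one may alternatively fold the bound $\|\phi_\epsilon\|_\infty\leq\|\mathbf{f}\|_\infty+o_\epsilon(1)$ directly into the conclusion of Proposition~\ref{poisson}, since the construction there naturally produces it.
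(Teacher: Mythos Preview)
Your argument is correct and follows essentially the same route as the paper's proof: a Dynkin/It\^{o} identity at the stopped time $\tau_{\breve{\mathcal{E}}_i}\wedge a\theta_\epsilon$ combined with the global bound on $\phi_\epsilon$ coming from its $\mathcal{L}_\epsilon$-harmonicity off $\mathcal{E}$. The only cosmetic differences are that the paper chooses the complementary test function $\mathbf{f}=\mathbf{1}_{S\setminus\{i\}}$ and phrases the global bound as ``by the maximum principle'' rather than via the optional-stopping representation you use.
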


\begin{proof}
Consider a function $\mathbf{f}:S\rightarrow\mathbb{R}$ defined by
\[
\mathbf{f}(j)=\begin{cases}
0 & \text{if }j=i\\
1 & \text{otherwise\;.}
\end{cases}
\]
Denote by $\phi_{\epsilon}=\phi_{\epsilon}^{\mathbf{f}}$ the function
that we obtain in Proposition \ref{poisson} with respect to this
function $\mathbf{f}$. For $\boldsymbol{x}\in\mathcal{E}_{i}$, by
It\"{o}'s formula and \eqref{poi1}, one can deduce that
\begin{align}
 & \mathbb{E}_{\boldsymbol{x}}^{\epsilon}\left[\phi_{\epsilon}(\boldsymbol{x}_{\epsilon}(a\theta_{\epsilon}\wedge\tau_{\mathcal{E}\setminus\mathcal{E}_{i}}))\right]-\phi_{\epsilon}(\boldsymbol{x})\nonumber \\
 & \quad=\sum_{i\in S}\mathbb{E}_{\boldsymbol{x}}^{\epsilon}\left[\int_{0}^{a\theta_{\epsilon}\wedge\tau_{\mathcal{E}\setminus\mathcal{E}_{i}}}\theta_{\epsilon}^{-1}\mathbf{a}_{\epsilon}(i)\,(\mathbf{L}\mathbf{f})(i)\,\chi_{\mathcal{E}_{i}}(\boldsymbol{x}_{\epsilon}(s))ds\right]\;.\label{et1}
\end{align}
It is noteworthy that for some constant $C>0$,
\begin{equation}
\left|\int_{0}^{a\theta_{\epsilon}\wedge\tau_{\mathcal{E}\setminus\mathcal{E}_{i}}}\theta_{\epsilon}^{-1}\mathbf{a}_{\epsilon}(i)\,(\mathbf{L}\mathbf{f})(i)\,\chi_{\mathcal{E}_{i}}(\boldsymbol{x}_{\epsilon}(s))ds\right|\le a\theta_{\epsilon}\cdot\theta_{\epsilon}^{-1}|\mathbf{a}_{\epsilon}(i)\,(\mathbf{L}\mathbf{f})(i)|\le Ca\;.\label{et2}
\end{equation}
Moreover, since $\boldsymbol{x}\in\mathcal{E}_{i}$, it follows from
\eqref{poi2} that
\begin{equation}
|\phi_{\epsilon}(\boldsymbol{x})|=|\phi_{\epsilon}(\boldsymbol{x})-\mathbf{f}(i)|=o_{\epsilon}(1)\;.\label{et3}
\end{equation}
By combining \eqref{et1}, \eqref{et2}, and \eqref{et3}, we can deduce
that
\begin{equation}
\mathbb{E}_{\boldsymbol{x}}^{\epsilon}\left[\phi_{\epsilon}(\boldsymbol{x}_{\epsilon}(a\theta_{\epsilon}\wedge\tau_{\mathcal{E}\setminus\mathcal{E}_{i}}))\right]\le Ca+o_{\epsilon}(1)\;.\label{et4}
\end{equation}
Next we attempt to bound the last expectation from below. Let $\delta>0$
be an arbitrarily small constant. Then, again by \eqref{poi2}, we
have $\phi_{\epsilon}+\delta\ge0$ on $\mathcal{E}_{i}$ and $\phi_{\epsilon}+\delta\ge1$
on $\mathcal{E}\setminus\mathcal{E}_{i}$ for all sufficiently small
$\epsilon$. Hence, by the maximum principle, $\phi_{\epsilon}+\delta\ge0$
on $\Omega$. Summing these, we have $\phi_{\epsilon}+\delta\ge\chi_{\mathcal{E}\setminus\mathcal{E}_{i}}$.
Therefore,
\begin{equation}
\mathbb{E}_{\boldsymbol{x}}^{\epsilon}\left[\phi_{\epsilon}(\boldsymbol{x}_{\epsilon}(a\theta_{\epsilon}\wedge\tau_{\mathcal{E}\setminus\mathcal{E}_{i}}))+\delta\right]\ge\mathbb{E}_{\boldsymbol{x}}^{\epsilon}\left[\chi_{\mathcal{E}\setminus\mathcal{E}_{i}}(\boldsymbol{x}_{\epsilon}(a\theta_{\epsilon}\wedge\tau_{\mathcal{E}\setminus\mathcal{E}_{i}}))\right]=\mathbb{P}_{\boldsymbol{x}}^{\epsilon}\left[\tau_{\mathcal{E}\setminus\mathcal{E}_{i}}<a\theta_{\epsilon}\right]\;.\label{et5}
\end{equation}
By \eqref{et4} and \eqref{et5}, we deduce
\[
\limsup_{\epsilon\rightarrow0}\sup_{\boldsymbol{x}\in\mathcal{E}_{i}}\mathbb{P}_{\boldsymbol{x}}^{\epsilon}\left[\tau_{\mathcal{E}\setminus\mathcal{E}_{i}}\le a\theta_{\epsilon}\right]\le Ca+\delta
\]
Because $\delta>0$ is arbitrary, the proof is completed.
\end{proof}
Next, we discuss the second ingredient. Denote by $\Delta=\Omega\setminus\mathcal{E}$
the outside of the metastable sets. Subsequently, by \eqref{inv1} and \eqref{inv2},
we have $\mu_{\epsilon}(\Delta)=o_{\epsilon}(1)$. In other words,
the set $\Delta$ is negligible in view of the equilibrium measure.
However, the second ingredient of tightness requires us to show this
negligibility of $\Delta$ in a dynamical sense. Hence, we define
the excursion time of the process $\boldsymbol{x}_{\epsilon}(t)$
on the set $\Delta$ up to time $t$ as
\[
\Delta(t)=\int_{0}^{t}\chi_{\Delta}(\boldsymbol{x}_{\epsilon}(s))\,ds\;.
\]
We remark that $\Delta(t)$ is a notion depending on $\epsilon$ although
we did not stress this in the notation. Then, we can formulate the
dynamic negligibility of $\Delta$ as follows:
\begin{lem}
\label{p34}For any sequence $(\pi_{\epsilon})_{\epsilon>0}$ of probability
measures concentrated on $\mathcal{E}_{i}$, the following holds:
\[
\lim_{\epsilon\rightarrow0}\theta_{\epsilon}^{-1}\mathbb{E}_{\pi_{\epsilon}}^{\epsilon}\left[\Delta(\theta_{\epsilon}t)\right]=0\;\;\text{for all }t\ge0.
\]
\end{lem}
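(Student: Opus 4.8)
The plan is to stay within the Poisson-equation philosophy of the paper. Rather than decomposing the trajectory into excursions outside $\mathcal{E}$ --- which is delicate, since the diffusion crosses $\partial\mathcal{E}$ on an arbitrarily fine time scale --- I would encode $\Delta(\theta_{\epsilon}t)$ through the solution of an auxiliary Poisson equation with source $\chi_{\Delta}$, apply It\^{o}'s formula once, and thereby reduce the whole lemma to a single $L^{\infty}$ bound on that solution.

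Concretely, since $\boldsymbol{x}_{\epsilon}$ is ergodic with invariant measure $\mu_{\epsilon}$ and $\int_{\Omega}(\chi_{\Delta}-\mu_{\epsilon}(\Delta))\,d\mu_{\epsilon}=0$, the function
\[
w_{\epsilon}(\boldsymbol{x})=\int_{0}^{\infty}\big(\mu_{\epsilon}(\Delta)-\mathbb{P}_{\boldsymbol{x}}^{\epsilon}[\boldsymbol{x}_{\epsilon}(s)\in\Delta]\big)\,ds
\]
is well defined and bounded (the integral converges by ergodicity) and solves $\mathcal{L}_{\epsilon}w_{\epsilon}=\chi_{\Delta}-\mu_{\epsilon}(\Delta)$; in the diffusion setting of \eqref{e12}, elliptic regularity makes $w_{\epsilon}$ smooth enough that It\^{o}'s formula applies with no boundary correction along $\partial\mathcal{E}$. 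Applying It\^{o}'s formula to $w_{\epsilon}$ between times $0$ and $\theta_{\epsilon}t$, using $\mathcal{L}_{\epsilon}w_{\epsilon}=\chi_{\Delta}-\mu_{\epsilon}(\Delta)$, and taking expectation under $\mathbb{P}_{\pi_{\epsilon}}^{\epsilon}$ (which kills the martingale term), one obtains
\[
\mathbb{E}_{\pi_{\epsilon}}^{\epsilon}\big[\Delta(\theta_{\epsilon}t)\big]=\mu_{\epsilon}(\Delta)\,\theta_{\epsilon}t+\mathbb{E}_{\pi_{\epsilon}}^{\epsilon}\big[w_{\epsilon}(\boldsymbol{x}_{\epsilon}(\theta_{\epsilon}t))\big]-\mathbb{E}_{\pi_{\epsilon}}^{\epsilon}\big[w_{\epsilon}(\boldsymbol{x}_{\epsilon}(0))\big]\;.
\]
Dividing by $\theta_{\epsilon}$ and noting that $\mu_{\epsilon}(\Delta)=1-\sum_{i}\mu_{\epsilon}(\mathcal{E}_{i})=o_{\epsilon}(1)$ by \eqref{inv1}--\eqref{inv2}, the first term is $o_{\epsilon}(1)$ while the remaining difference is at most $2\theta_{\epsilon}^{-1}\Vert w_{\epsilon}\Vert_{\infty}$; hence the lemma reduces to the estimate
\[
\Vert w_{\epsilon}\Vert_{\infty}=o(\theta_{\epsilon})\;.
\]

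This bound is the heart of the matter, and it is where the argument becomes model-dependent. I would obtain it by splitting the source along the domains of attraction $\mathcal{D}_{1},\dots,\mathcal{D}_{K}$ of the zero-temperature flow \eqref{e11}, with $\mathcal{E}_{j}\subset\mathcal{D}_{j}$: setting $\rho_{\epsilon}(j):=\mu_{\epsilon}(\Delta\cap\mathcal{D}_{j})/\mu_{\epsilon}(\mathcal{D}_{j})$, which is $o_{\epsilon}(1)$ by \eqref{inv1}--\eqref{inv2}, one writes $\chi_{\Delta}=\sum_{j=1}^{K}\rho_{\epsilon}(j)\,\chi_{\mathcal{D}_{j}}+g_{\epsilon}$, where $g_{\epsilon}$ has vanishing $\mu_{\epsilon}$-average on each basin. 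Feeding this decomposition into the formula for $w_{\epsilon}$ yields two contributions. The first is bounded by $\big(\max_{j}\rho_{\epsilon}(j)\big)\sum_{j}\int_{0}^{\infty}\big|\mathbb{P}_{\boldsymbol{x}}^{\epsilon}[\boldsymbol{x}_{\epsilon}(s)\in\mathcal{D}_{j}]-\mu_{\epsilon}(\mathcal{D}_{j})\big|\,ds$; each integral there is at most of the order of the mixing time of $\boldsymbol{x}_{\epsilon}$, so by the usual relations between $\theta_{\epsilon}$, the spectral gap and the depths of the $\mathcal{E}_{j}$ (see \cite{BGK}) the product is $o(\theta_{\epsilon})$. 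The second contribution involves $g_{\epsilon}$: since it has zero $\mu_{\epsilon}$-average on every basin, it is essentially orthogonal to the slow, metastable modes of $\mathcal{L}_{\epsilon}$, so the semigroup applied to $g_{\epsilon}$ relaxes on the fast within-well time scale, which is $\ll\theta_{\epsilon}$, and its time-integral is again $o(\theta_{\epsilon})$. Adding the two, $\Vert w_{\epsilon}\Vert_{\infty}=o(\theta_{\epsilon})$.

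The hard part is precisely this last step --- making the two heuristics above rigorous with uniform-in-$\boldsymbol{x}$ bounds: that $g_{\epsilon}$ loads only on the fast spectral modes of $\mathcal{L}_{\epsilon}$, and that the relevant mixing time is of order $\theta_{\epsilon}$. This amounts to a quantitative description of how the diffusion relaxes inside a well and escapes the unstable region $\Delta$, and it relies on the gradient structure of $U$ together with the non-degeneracy of the Hessians at the critical points; these estimates are the content of \cite{LS3} (and of \cite{RS} in the continuous-time framework). Granting them, the decomposition above closes the argument.
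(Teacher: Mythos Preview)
Your route is genuinely different from the paper's. The paper does \emph{not} use a Poisson equation here: under the extra hypothesis that $\pi_{\epsilon}$ has a density $f_{\epsilon}\in L^{p}(\mu_{\epsilon})$ uniformly in $\epsilon$, it simply observes that $u_{\epsilon}(\boldsymbol{x}):=\theta_{\epsilon}^{-1}\mathbb{E}_{\boldsymbol{x}}^{\epsilon}[\Delta(\theta_{\epsilon}t)]$ satisfies $\int u_{\epsilon}\,d\mu_{\epsilon}=t\,\mu_{\epsilon}(\Delta)$ by stationarity, uses the trivial bound $u_{\epsilon}\le t$, and applies H\"older to $\int u_{\epsilon}f_{\epsilon}\,d\mu_{\epsilon}$. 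For general $\pi_{\epsilon}$ it reduces to showing $\sup_{\boldsymbol{x},\boldsymbol{y}\in\mathcal{E}_{i}}\big(\mathbb{E}_{\boldsymbol{x}}^{\epsilon}[\Delta(\theta_{\epsilon}t)]-\mathbb{E}_{\boldsymbol{y}}^{\epsilon}[\Delta(\theta_{\epsilon}t)]\big)\ll\theta_{\epsilon}$, which is obtained by coupling (discrete or one--dimensional diffusion) or by a Freidlin--Wentzell large--deviation argument (multidimensional diffusion). So the paper's argument is elementary on the $L^{p}$ side and entirely dynamical/probabilistic on the general side; no spectral input is used.

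Your It\^{o}/Dynkin reduction to $\Vert w_{\epsilon}\Vert_{\infty}=o(\theta_{\epsilon})$ is correct and, if achievable, yields a stronger statement (uniform over \emph{all} starting points) than the lemma asks for. The concern is that the step you label ``the heart of the matter'' is at least as deep as the rest of the metastability analysis. Two points in particular need more than a heuristic: (i) the assertion that $g_{\epsilon}$, having zero $\mu_{\epsilon}$--mean on each basin, decays under the semigroup on the fast intrawell scale is an $L^{2}(\mu_{\epsilon})$ statement about approximate orthogonality to the low eigenfunctions; promoting it to an $L^{\infty}$ bound on $\int_{0}^{\infty}P_{s}^{\epsilon}g_{\epsilon}\,ds$ uniformly in $\boldsymbol{x}$ requires precisely the kind of eigenfunction localisation proved in \cite{BGK} and is not free; (ii) the bound $\int_{0}^{\infty}\big|\mathbb{P}_{\boldsymbol{x}}^{\epsilon}[\boldsymbol{x}_{\epsilon}(s)\in\mathcal{D}_{j}]-\mu_{\epsilon}(\mathcal{D}_{j})\big|\,ds=O(\theta_{\epsilon})$ uniformly in $\boldsymbol{x}$ is a uniform mixing--time estimate which in metastable situations typically carries at least a logarithmic correction, so one must check that $\max_{j}\rho_{\epsilon}(j)$ decays fast enough to absorb it. Both are plausible in the SRPDS setting, but neither follows from \eqref{inv1}--\eqref{inv2} alone; you are effectively importing the full spectral picture of \cite{BGK}. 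By contrast, the paper's H\"older argument needs nothing beyond $\mu_{\epsilon}(\Delta)=o_{\epsilon}(1)$, and its general--case argument trades spectral theory for coupling or large deviations, which are lighter inputs for this particular lemma.
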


Here, we only provide the proof of Lemma \ref{p34} when $\pi_{\epsilon}$
has a density function with respect to $\mu_{\epsilon}$ for each
$\epsilon>0$, and this density function belongs to $L^{p}(\mu_{\epsilon})$
for some $p>1$ in a uniform manner, i.e.,
\begin{equation}
\limsup_{\epsilon\rightarrow0}\int_{\Omega}\left|\frac{d\pi_{\epsilon}}{d\mu_{\epsilon}}\right|^{p}d\mu_{\epsilon}=M<\infty.\label{clp}
\end{equation}
For this case with mild initial distribution, we can deduce a simple
proof. For the general case, see the remark after the proof.
\begin{proof}[Proof of Lemma \ref{p34} under the assumption \eqref{clp}]
We fix $t\ge0$ and write
\[
u_{\epsilon}(\boldsymbol{x})=\theta_{\epsilon}^{-1}\mathbb{E}_{\boldsymbol{x}}^{\epsilon}\left[\Delta(\theta_{\epsilon}t)\right]\;.
\]
By Fubini's theorem, we obtain
\begin{equation}
\int_{\Omega}u_{\epsilon}\,d\mu_{\epsilon}=\theta_{\epsilon}^{-1}\mathbb{E}_{\mu_{\epsilon}}^{\epsilon}\left[\int_{0}^{\theta_{\epsilon}t}\chi_{\Delta}(\boldsymbol{x}_{\epsilon}(s))ds\right]=\theta_{\epsilon}^{-1}\int_{0}^{\theta_{\epsilon}t}\mathbb{P}_{\mu_{\epsilon}}^{\epsilon}\left[\boldsymbol{x}_{\epsilon}(s)\in\Delta\right]ds=t\,\mu_{\epsilon}(\Delta)\;.\label{vt1}
\end{equation}
We write $f_{\epsilon}=\frac{d\pi_{\epsilon}}{d\mu_{\epsilon}}$ so
that we can write
\begin{equation}
\theta_{\epsilon}^{-1}\mathbb{E}_{\pi_{\epsilon}}^{\epsilon}\left[\Delta(\theta_{\epsilon}t)\right]=\int_{\Omega}(u_{\epsilon}f_{\epsilon})\,d\mu_{\epsilon}\label{vt2}
\end{equation}
Now, we apply Holder's inequality, trivial bound $u_{\epsilon}\le t$,
\eqref{vt1} and \eqref{clp} to the right-hand side of the previous
display to deduce the following:
\begin{align}
\int_{\Omega}(u_{\epsilon}f_{\epsilon})\,d\mu_{\epsilon} & \le\left[\int_{\Omega}u_{\epsilon}\,d\mu_{\epsilon}\right]^{1/q}\left[\int_{\Omega}(u_{\epsilon}f_{\epsilon}^{p})\,d\mu_{\epsilon}\right]^{1/p}\nonumber \\
 & \le t\mu_{\epsilon}(\Delta)^{1/q}\,\left[\int_{\mathbb{R}^{d}}f_{\epsilon}^{p}\,d\mu_{\epsilon}\right]^{1/p}<t\mu_{\epsilon}(\Delta)^{1/q}M^{1/p}\;,\label{vt3}
\end{align}
where $q$ is the conjugate exponent of $p$ satisfying $\frac{1}{p}+\frac{1}{q}=1$.
Since $\mu_{\epsilon}(\Delta)=o_{\epsilon}(1)$, we complete the proof
by conditions \eqref{vt2} and \eqref{vt3}.
\end{proof}
\begin{rem}
To address the general case, it is sufficient to demonstrate that
\[
\lim_{\epsilon\rightarrow0}\sup_{\boldsymbol{x}\in\mathcal{E}_{i}}\theta_{\epsilon}^{-1}\mathbb{E}_{\boldsymbol{x}}^{\epsilon}\left[\Delta(\theta_{\epsilon}t)\right]=0\;.
\]
In view of the previous lemma for the special case, it suffices to
verify that
\[
\sup_{\boldsymbol{x},\,\boldsymbol{y}\in\mathcal{E}_{i}}\left(\mathbb{E}_{\boldsymbol{x}}^{\epsilon}\left[\Delta(\theta_{\epsilon}t)\right]-\mathbb{E}_{\boldsymbol{y}}^{\epsilon}\left[\Delta(\theta_{\epsilon}t)\right]\right)\ll\theta_{\epsilon}\;.
\]
When $\Omega$ is a discrete set, this can be typically performed
by the coupling (see \cite{BL1, BL2}). In the case of diffusion processes
in a $1$-dimensional torus, the same type of coupling argument can
be applied (see \cite{LS3}). However, the diffusion processes such
as the SRPDS \eqref{e12} in dimension $d\ge2$, two processes starting
from different points cannot be coupled exactly; hence, another argument
is required. In \cite[Appendix]{RS}, an argument based on the large-deviation
theory has been introduced. We refer to these listed articles for
the details of the proof of Lemma \ref{p34} for general cases.
\end{rem}

The proof of Proposition \ref{tight} from these two lemmas above
are routine and follows from Aldous' criterion. We refer to \cite[Section 7]{LS3}
or \cite[Section 5]{RS} for more details, but we herein provide a
brief sketch of the proof. First, we should introduce the appropriate
filtration. Write $\widehat{\boldsymbol{x}}_{\epsilon}(t)=\boldsymbol{x}_{\epsilon}(\theta_{\epsilon}t)$
as the accelerated process, and write $\widehat{\mathbb{P}}_{\pi_{\epsilon}}^{\epsilon}$
the law of process $(\widehat{\boldsymbol{x}}_{\epsilon}(t))_{t\ge0}$
starting from $\pi_{\epsilon}$. Then, denote by $\{\mathscr{F}_{t}^{0}:t\ge0\}$
the natural filtration of $D([0,\,\infty),\,\Omega)$ (or $C([0,\,\infty),\,\Omega)$
if $(\boldsymbol{x}_{\epsilon}(t))$ is a diffusion process) with
respect to the process $\widehat{\boldsymbol{x}}_{\epsilon}(\cdot)$,
namely,
\[
\mathscr{F}_{t}^{0}=\sigma(\widehat{\boldsymbol{x}}_{\epsilon}(s):s\in[0,\,t])\;.
\]
and define $\{\mathscr{F}_{t}:t\ge0\}$ as the usual augmentation
of $\{\mathscr{F}_{t}^{0}:t\ge0\}$ with respect to $\mathbb{\widehat{\mathbb{P}}}_{\pi_{\epsilon}}^{\epsilon}$
. Define $\mathscr{G}_{t}=\mathscr{F}_{S_{\epsilon}(t)}$ for $t\ge0$,
where $S_{\epsilon}$ is defined in \eqref{Se}. For $M>0$, we define
$\mathcal{\mathscr{T}}_{M}$ as the collection of stopping times with
respect to the filtration $\{\mathscr{G}_{t}\}_{t\ge0}$ bounded by
$M$.
\begin{proof}[Proof of Proposition \ref{tight}]
We start by considering the first statement of the proposition. By
Aldous' criterion, it suffices to verify that, for all $M>0$,
\begin{equation}
\lim_{a_{0}\rightarrow0}\limsup_{\epsilon\rightarrow0}\sup_{\tau\in\mathcal{\mathscr{T}}_{M}}\sup_{a\in(0,\,a_{0})}\mathbb{P}_{\pi_{\epsilon}}^{\epsilon}\left[\mathbf{x}_{\epsilon}(\tau+a)\neq\mathbf{x}_{\epsilon}(\tau)\right]=0\;.\label{e341}
\end{equation}
By Lemma \ref{p34}, it suffices to demonstrate that
\[
\lim_{a_{0}\rightarrow0}\limsup_{\epsilon\rightarrow0}\sup_{\tau\in\mathcal{\mathscr{T}}_{M}}\sup_{a\in(0,\,a_{0})}\mathbb{P}_{\pi_{\epsilon}}^{\epsilon}\left[\mathbf{x}_{\epsilon}(\tau+a)\neq\mathbf{x}_{\epsilon}(\tau),\,S_{\epsilon}(\tau+a)-S_{\epsilon}(\tau)\le2a_{0}\right]=0\;.
\]
Since $\mathbf{x}_{\epsilon}(t)=\Psi(\widehat{\boldsymbol{x}}_{\epsilon}(S_{\epsilon}(t)))$,
the last probability can be bounded above by
\[
\mathbb{P}_{\pi_{\epsilon}}^{\epsilon}\left[\Psi(\widehat{\boldsymbol{x}}_{\epsilon}(S_{\epsilon}(\tau)+t)\neq\Psi(\widehat{\boldsymbol{x}}_{\epsilon}(S_{\epsilon}(\tau)))\;\text{for some }t\in(0,\,2a_{0}]\right]\;.
\]
One can readily demonstrate that $S^{\epsilon}(\tau)$ is a stopping
time with respect to the filtration $\{\mathscr{F}_{t}\}$ (see \cite[Lemma 7.2]{LS3});
hence, by the strong Markov property the last probability is bounded
above by
\[
\sup_{\boldsymbol{x}\in\mathcal{E}_{i}}\mathbb{P}_{\boldsymbol{x}}^{\epsilon}\left[\Psi(\widehat{\boldsymbol{x}}_{\epsilon}(t))\neq\Psi(\boldsymbol{x})\;\text{for some }t\in(0,\,2a_{0}]\right]=\sup_{\boldsymbol{x}\in\mathcal{E}_{i}}\mathbb{P}_{\boldsymbol{x}}^{\epsilon}\left[\tau_{\breve{\mathcal{E}}_{i}}\le2a_{0}\theta_{\epsilon}\right]\;.
\]
Thus, the proof of \eqref{e341} is completed by Lemma \ref{tight2}.

The assertion $\mathbf{Q}^{*}(\mathbf{x}(0)=i)=1$ is trivial. For
the last assertion of the proposition, it suffices to prove that
\[
\lim_{a_{0}\rightarrow0}\limsup_{\epsilon\rightarrow0}\mathbb{P}_{\pi_{\epsilon}}^{\epsilon}\left[\mathbf{x}_{\epsilon}(t-a)\neq\mathbf{x}_{\epsilon}(t)\text{ for some }a\in(0,\,a_{0})\right]=0\;.
\]
The proof for this is the same as that above.
\end{proof}

\subsection{\label{sec23}Identification of limit points and the proof of Theorem
\ref{tmain}}

The proof of Theorem \ref{tmain} can be completed by identifying
the limit points of $(\mathbf{Q}_{\pi_{\epsilon}}^{\epsilon})_{\epsilon>0}$.
To this end, fix $\mathbf{f}\in\mathbb{R}^{S}$, and let $\phi_{\epsilon}=\phi_{\epsilon}^{\mathbf{f}}$
be the function obtained in Proposition \ref{poisson}. We fix $i$
and $\pi_{\epsilon}$ appearing in the statement of Theorem \ref{tmain}.
Since the generator of the accelerated process $\widehat{\boldsymbol{x}}(t)$
is $\theta_{\epsilon}\mathcal{L}_{\epsilon}$, the process $(M_{\epsilon}(t))_{t\ge0}$
defined by
\[
M_{\epsilon}(t)=\phi_{\epsilon}(\widehat{\boldsymbol{x}}_{\epsilon}(t))-\theta_{\epsilon}\int_{0}^{t}(\mathscr{\mathcal{L}}_{\epsilon}\phi_{\epsilon})(\widehat{\boldsymbol{x}}_{\epsilon}(s))ds
\]
is a martingale with respect to the filtration $\{\mathscr{F}_{t}\}$
introduced above. Since $\widehat{\boldsymbol{x}}_{\epsilon}(S_{\epsilon}(t))=\boldsymbol{\overline{x}}_{\epsilon}(\theta_{\epsilon}t)$
by definition, we can write
\begin{equation}
M_{\epsilon}(S_{\epsilon}(t))=\phi_{\epsilon}(\boldsymbol{\overline{x}}_{\epsilon}(\theta_{\epsilon}t))-\theta_{\epsilon}\int_{0}^{t}(\mathscr{\mathcal{L}}_{\epsilon}\phi_{\epsilon})(\boldsymbol{\overline{x}}_{\epsilon}(\theta_{\epsilon}s))ds\;.\label{ek0}
\end{equation}
Since $\mathscr{G}_{t}=\mathscr{F}_{S_{\epsilon}(t)}$ (see \cite[Lemma 5.5]{RS}),
the process $(M_{\epsilon}(S_{\epsilon}(t)))_{t\ge0}$ is a martingale
with respect to $\{\mathscr{G}_{t}\}$. By Proposition \ref{poisson},
we have
\[
\phi_{\epsilon}=\mathbf{f}\circ\Psi+o_{\epsilon}(1)\;\;\text{and\;\;}\mathscr{\theta_{\epsilon}\mathcal{L}}_{\epsilon}\phi_{\epsilon}=(\mathbf{L}\mathbf{f})\circ\Psi+o_{\epsilon}(1)\;\;\text{on }\mathcal{E}\;.
\]
Since $\Psi(\boldsymbol{\overline{x}}_{\epsilon}(\theta_{\epsilon}t))=\mathbf{x}_{\epsilon}(t)$,
we can rewrite \eqref{ek0} as
\begin{align*}
M_{\epsilon}(S_{\epsilon}(t))) & =\mathbf{f}(\mathbf{x}_{\epsilon}(t))-\int_{0}^{t}(\mathbf{L}\mathbf{f})(\mathbf{x}_{\epsilon}(s))ds+o_{\epsilon}(1)\;.
\end{align*}
Hence, if $\mathbf{Q}^{*}$ is a limit point of the family $\{\mathbf{Q}_{\pi_{\epsilon}}^{\epsilon}\}_{\epsilon\in(0,\,1]}$,
the process $(M^{*}(t))_{t\ge0}$ defined by
\begin{equation}
M^{*}(t)=\mathbf{f}(\mathbf{x}(t))-\int_{0}^{t}(\mathbf{L}\mathbf{f})(\mathbf{x}(s))ds\label{mt}
\end{equation}
is a martingale under $\mathbf{Q}^{*}$. This completes the characterization
of the limit points.
\begin{proof}[Proof of Theorem \ref{tmain}]
 Let $\mathbf{Q}^{*}$ be a limit point of the family $\{\mathbf{Q}_{\pi_{\epsilon}}^{\epsilon}\}_{\epsilon\in(0,\,1]}$.
Subsequently, as demonstrated above, $(M(t))_{t\ge0}$ defined by
\eqref{mt} is a martingale under $\mathbf{Q}^{*}$; furthermore,
by Proposition \ref{tight}, we obtain $\mathbf{Q}^{*}[\mathbf{x}(0)=i]=1$
and $\mathbf{Q}^{*}(\mathbf{x}(t)\neq\mathbf{x}(t-))=0$ for all $t>0$.
The only probability measure on $D([0,\,\infty),\,S)$ satisfying
these properties simultaneously is $\mathbf{Q}_{i}$; thus, we can
conclude that $\mathbf{Q}^{*}=\mathbf{Q}_{i}$. This completes the
proof.
\end{proof}

\subsection{\label{sec33}Analysis of Poisson Equation }

In view of the argument presented in the previous section, the entire
analysis is dependent on the construction of $\phi_{\epsilon}^{\mathbf{f}}$
introduced in Proposition \ref{poisson}. This should be proven for
each model.

For the non-reversible diffusion processes in a $1$-dimensional torus
considered in \cite{LS3}, we found an explicit solution $\psi_{\epsilon}$
of the equation \eqref{poi1}. It is noteworthy that for each constant
$c_{\epsilon}$, the function $\psi_{\epsilon}+c_{\epsilon}$ is also
a solution of \eqref{poi1}. Hence, in this case we can select $c_{\epsilon}$
carefully such that the function $\phi_{\epsilon}=\psi_{\epsilon}+c_{\epsilon}$
satisfies \eqref{poi2} as well. We refer to \cite[Section 9]{LS3}
for the details.

For the SRPDS \eqref{e12} in $\mathbb{R}^{d}$, we cannot expect
such a closed form solution. We shall sketch our idea of the proof
in the next paragraph. We believe that the idea presented herein can
be applied to a broad scope of examples as well after a suitable modification.
We refer to \cite[Section 4]{RS} for the full details.

Let $\mathcal{D}_{\epsilon}(\phi)=\int_{\Omega}\phi(-\mathcal{L}_{\epsilon}\phi)d\mu_{\epsilon}$
be the Dirichlet form associated to the process $\boldsymbol{y}_{\epsilon}(t)$
defined in \eqref{e12}. Since $\mathcal{L}_{\epsilon}$ is self-adjoint
with respect to $d\mu_{\epsilon}$, we can find a solution of the
Poisson equation \eqref{poi1} by a minimizer of the functional defined
by
\[
\mathcal{I}_{\epsilon}(\phi)=\frac{1}{2}\theta_{\epsilon}\,\mathcal{D}_{\epsilon}(\phi)+\sum_{i\in S}\mathbf{a}_{\epsilon}(i)\,(\mathbf{L}\mathbf{f})(i)\,\int_{\mathcal{E}_{i}}\phi\,d\mu_{\epsilon}\;.
\]
Take a minimizer $\psi_{\epsilon}$ of this functional so that $\psi_{\epsilon}$
solves the equation \eqref{poi1}. For this minimizer, one can readily
show that, for some $\lambda_{\epsilon}>0$,
\[
\theta_{\epsilon}\,\mathcal{D}_{\epsilon}(\psi_{\epsilon})=\lambda_{\epsilon}\;\;\text{and}\;\;\sum_{i\in S}\mathbf{a}_{\epsilon}(i)\,(\mathbf{L}\mathbf{f})(i)\,\int_{\mathcal{E}_{i}}\psi_{\epsilon}\,d\mu_{\epsilon}=-\lambda_{\epsilon}\;.
\]
Then, one can show that $\lambda_{\epsilon}=O_{\epsilon}(1)$. Denote
by $m$ the Lebesgue measure on $\mathbb{R}^{d}$ and define $\mathbf{q}=(\mathbf{q}(i):i\in S)\in\mathbb{R}^{S}$
such a manner that
\[
\mathbf{q}(i)=\frac{1}{m(\mathcal{E}_{i})}\int_{\mathcal{E}_{i}}\psi_{\epsilon}(\boldsymbol{x})d\boldsymbol{x}\;\;;\;i\in S\;.
\]
Subsequently, by the bound on $\mathcal{D}_{\epsilon}(\psi_{\epsilon})$
and Poincare's inequality we can verify that
\begin{equation}
\Vert\psi_{\epsilon}-\mathbf{q}(i)\Vert_{L^{2}(\mathcal{E}_{i})}=\int_{\mathcal{E}_{i}}(\psi_{\epsilon}-\mathbf{q}(i))^{2}dx=o_{\epsilon}(1)\;\;\text{for all }i\in S.\label{ee1}
\end{equation}
Then, a technique developed in \cite{ET, ST} based on the interior
elliptic estimate in the partial differential equations theory allows
us to prove that
\begin{equation}
\Vert\psi_{\epsilon}-\mathbf{q}(i)\Vert_{L^{\infty}(\mathcal{E}_{i})}=\lambda_{\epsilon}o_{\epsilon}(1)\;\;\text{for all }i\in S.\label{ee2}
\end{equation}
To obtain this, we first prove \eqref{ee1} for a slightly larger
set $\widetilde{\mathcal{E}}_{i}\Supset\mathcal{E}_{i}$ and then
use the interior elliptic estimate \cite[Theorem 8.17]{GT} to enhance
the $L^{2}$-estimate to the interior $L^{\infty}$-estimate. For
the detail we refer to \cite[Proposition 4.8]{RS}.

The final step is to find a constant $c_{\epsilon}$ such that $\mathbf{q}(i)+c_{\epsilon}=\mathbf{f}(i)$
for all $i\in S$. We will not provide the detailed proof for this,
but we strongly recommend the readers to read \cite[Section 4.5]{RS},
in which a novel method to prove the characterization of $\mathbf{q}$
has been developed. The primary idea is to couple the function $\psi_{\epsilon}$
with a test function that is already popular in the study of metastability.
We believe that the argument therein can be applied to a broad scope
of models.

\section{\label{sec4}Conclusion}

At the time when this review paper was written, the approach via the
Poisson equation introduced herein has been applied to the reversible
SRPDS \eqref{e12} in \cite{RS}, and the non-reversible SRPDS in
a $1$-dimensional torus in \cite{LS3}. We believe that this methodology
can be applied to a wide range of models exhibiting metastability.
In particular, the approach based on the Poisson equation did not
heavily use the reversibility of the underlying metastable process;
hence, we hope that our approach paves the way for the quantitative
analysis of non-reversible metastable processes, in which many open
problems still remain.


\begin{thebibliography}{10}
\bibitem{AGL}I. Armend\'{a}riz, S. Grosskinsky, M. Loulakis: Metastability
in a condensing zero-range process in the thermodynamic limit. Probab.
Theory Related Fields \textbf{169}, 105--175 (2017)

\bibitem{BL1}J. Beltr\'{a}n, C. Landim: Tunneling and metastability of continuous time Markov chains. J. Stat. Phys. {\bf 140},   1065--1114 (2010)

\bibitem{BL2}J. Beltr\'{a}n, C. Landim: Tunneling and metastability of continuous time Markov chains II. J. Stat. Phys. {\bf 149},   598--618 (2012)

\bibitem{BL3}J. Beltr\'{a}n, C. Landim:  Metastability of reversible condensed zero range processes on a finite set. Probab. Theory Related Fields {\bf 152},   781--807 (2012)

\bibitem{BBI} A. Bianchi, A. Bovier, D. Ioffe: Sharp asymptotics
for metastability in the random field Curie-Weiss model. Electron.
J. Probab. \textbf{14}, 1541-{}-1603 (2009)

\bibitem{BDG}A. Bianchi, S. Dommers, and C. Giardin\`{a}: Metastability
in the reversible inclusion process. Electron. J. Probab. \textbf{22}
(2017)

\bibitem{BEGK2}A. Bovier, M. Eckhoff, V. Gayrard, M. Klein:   Metastability in stochastic dynamics of disordered mean-field   models.  Probab. Theory Relat. Fields {\bf 119}, 99--161 (2001)

\bibitem{BEGK1}A. Bovier, M. Eckhoff, V. Gayrard, M. Klein:   Metastability in reversible diffusion process I. Sharp asymptotics for capacities and exit times. J. Eur. Math. Soc. {\bf 6}, 399--424   (2004)

\bibitem{BdH}A. Bovier, F. den Hollander: {\sl Metastability: a potential-theoretic approach}.  Grundlehren der mathematischen Wissenschaften {\bf 351}, Springer, Berlin, 2015.

\bibitem{BGK}A. Bovier, V. Gayrard, M. Klein: Metastability in reversible
diffusion processes II. Precise asymptotics for small eigenvalues.
J. Eur. Math. Soc. \textbf{7}, 69--99 (2005)

\bibitem{BM}A. Bovier, F. Manzo: Metastability in Glauber Dynamics
in the Low-Temperature Limit: Beyond Exponential Asymptotics. J. Stat.
Phys. \textbf{107}, 757--779 (2002)

\bibitem{ET}C. Evans, P. Tabrizian: Asymptotics for scaled Kramers-Smoluchoswski
equations. Siam J. Math. Anal. \textbf{48,} 2944--2961 (2016)

\bibitem{FW3}M. I. Freidlin, A. D. Wentzell: \textit{Random Perturbations.
In: Random Perturbations of Dynamical Systems.} Grundlehren der mathematischen
Wissenschaften \textbf{260}. Springer, New York, NY, 1998

\bibitem{GL}A. Gaudilli\`ere, C. Landim: A Dirichlet principle for non reversible Markov chains and some recurrence theorems.   Probab. Theory Related Fields {\bf 158}, 55--89 (2014)

\bibitem{GT}D. Gilbarg and N. Trudinger: \textit{Elliptic Partial
Differential Equations of Second Order}. 2nd ed, Springer, 1983.

\bibitem{GRV2}S. Grosskinsky, F. Redig and K. Vafayi: Dynamics of
condensation in the symmetric inclusion process. Electron. J. Probab.
\textbf{18}, 1--23 (2013)

\bibitem{Lan1}C. Landim: A topology for limits of Markov chains. Stoch. Proc. Appl. {\bf 125}, 1058--1098 (2014)

\bibitem{Lan2}C. Landim: Metastability for a Non-reversible Dynamics:   The Evolution of the Condensate in Totally Asymmetric Zero Range   Processes. Commun. Math. Phys. {\bf 330}, 1--32 (2014)

\bibitem{Lan4}C. Landim: Metastable Markov chains. arXiv:1807.04144
(2018)

\bibitem{LMS}C. Landim, M. Mariani, I. Seo:. A Dirichlet and a   Thomson principle for non-selfadjoint elliptic operators,   Metastability in non-reversible diffusion processes. Arch. Rat. Mech. Anal., forthcoming. (2018)

\bibitem{LMT}C. Landim, R. Misturini, K. Tsunoda: Metastability of   reversible random walks in potential field.  J. Stat. Phys. {\bf 160}, 1449--1482 (2015)

\bibitem{LS1}C. Landim, I. Seo: Metastability of non-reversible random   walks in a potential field, the Eyring-Kramers transition rate formula. Comm. Pure. Appl. Math. {\bf 71} 203--266   (2018)

\bibitem{LS2}C. Landim, I. Seo: Metastability of non-reversible mean-field Potts model with three spins. J. Stat. Phys. {\bf 165}, 693--726   (2016)

\bibitem{LS3}C. Landim, I. Seo: Metastability of one-dimensional,
non-reversible diffusions with periodic boundary conditions. Ann.
Henri Poincar\'{e} (B) Probability and Statistics., forthcoming.
(2018)

\bibitem{NZ}F. R. Nardi, A. Zocca: Tunneling behavior of Ising and
Potts models in the low-temperature regime. Stoch. Proc. Appl., forthcoming.
(2018)

\bibitem{RS}F. Rezakhanlou and I. Seo: Scaling limit of metastable
diffusion processes. Submitted. (2018)

\bibitem{Seo1}I. Seo : Condensation of non-reversible zero-range
processes. Commun. Math. Phys., forthcoming. (2018)

\bibitem{ST}I. Seo, P. Tabrizian: Asymptotics for scaled Kramers-Smoluchowski
equation in several dimensions with general potentials. Sumbitted.
(2018)

\bibitem{Slo}M. Slowik: A note on variational representations of   capacities for reversible and nonreversible Markov chains. unpublished, Technische Universit\"{a}t Berlin, 2012
\end{thebibliography}
\end{document}